\pgfplotsset{compat=1.18}
\newtheorem{remark}{Remark}
\newcommand{\nd}{\noindent}
\title{A Unified Trace-Optimization Framework \\ for Multidimensionality Reduction}
\author{M. El Guide \thanks{FGSES, University Mohammed VI Polytechnic, Rabat, Morocco. E-mail: mohamed.elguide@um6p.ma} 
	\and A. Elichi\thanks{Université du Littoral Cote d'Opale, LMPA, 50 rue F. Buisson, 62228 Calais-Cedex, France. E-mail:elichi.alaa@gmail.com}
    \and K. Jbilou\thanks{Université du Littoral Cote d'Opale, LMPA, 50 rue F. Buisson, 62228 Calais-Cedex, France. E-mail: khalide.jbilou@univ-littoral.fr}
    \and L. Reichel\thanks{Department of Mathematics, Kent State University, Kent, OH 44122, USA. E-mail: reichel@math.kent.edu} \and H. Alqahtani\thanks{Department of Mathematics, King Abdulaziz University, Campus Rabigh 21911, Saudi Arabia. E-mail: hfalqahtani@kau.edu.sa}}
\begin{document}
	\maketitle
	
\begin{abstract}
This paper presents a comprehensive overview of several multidimensional reduction methods focusing on  Multidimensional Principal Component Analysis (MPCA), Multilinear Orthogonal Neighborhood Preserving Projection (MONPP), Multidimensional Locally Linear Embedding (MLLE), and  Multidimensional Laplacian Eigenmaps (MLE). These techniques are formulated within a unified framework based on trace optimization, where the dimensionality reduction problem is expressed as maximization or minimization problems. In addition to the linear MPCA and MONPP approaches, kernel-based extensions of these methods also are presented.
The latter methods make it possible to capture nonlinear relations between high-dimensional data. A comparative analysis highlights the theoretical foundations, assumptions, and computational efficiency of each method, as well as their practical applicability. The study provides insights and guidelines for selecting an appropriate dimensionality reduction technique suited to the application at hand.
\end{abstract}

\section{Introduction}
High-dimensional data arise in many modern applications including computer vision, pattern recognition, hyperspectral imaging, audio analysis, and data visualization. In these applications each observation may be represented by a large number of variables, often with strong correlations and complex geometric structure. Direct learning is typically hampered by the curse of dimensionality, computational cost, and interpretability. Dimensionality reduction techniques address these issues by seeking low-dimensional representations that preserve the most relevant information in the data, for example in terms of variance, neighborhood structure, or class separability.

A classical and widely used approach is Principal Component Analysis (PCA) \cite{Hotelling1933,Jolliffe2002}. PCA is a linear and unsupervised method that finds orthogonal directions, known as principal components, that capture the largest variance of the data. By retaining only a small number of leading components, PCA provides a compact representation that often is sufficient for tasks such as visualization, compression, and noise reduction. However, since PCA does not exploit label information, its discriminative power may be limited in supervised tasks such as classification.

To overcome this limitation, many supervised linear dimensionality reduction methods have been proposed. Linear Discriminant Analysis (LDA) \cite{Dufrenois2023,Fisher1936,Rao1948} and related Fisher-type criteria seek to determine projections that maximize the ratio between between-class and within-class scatter, thereby enhancing class separability in the reduced space. Moreover, a rich family of manifold and graph-based techniques has been developed to preserve local geometric relations between samples. Examples include Locally Linear Embedding (LLE) \cite{Roweis2000}, Laplacian Eigenmaps (LE) \cite{belkin2003}, Locality Preserving Projections (LPP) \cite{He2003}, and Orthogonal Neighborhood Preserving Projections (ONPP) \cite{Kokiopoulou2007}. Kernel extensions of many of these methods, together with trace optimization formulations and generalized eigenvalue problems, have been investigated in, e.g.\cite{Kokiopoulou2011,Muller2001,Ngo2012,Scholkopf1998,Shawe2004}.

Most of the above mentioned methods operate on vectorial data and therefore assume that each sample can be represented as a vector in an Euclidean space. In many applications, however, data are naturally multidimensional. Typical examples include color and multispectral images, video sequences, tensor-valued time series, and multisensor measurements. A common workaround consists in vectorizing the original multidimensional arrays into long vectors before applying matrix-based techniques such as PCA or LDA. However, this strategy often destroys important spatial or multimode correlations, leads to very high ambient dimensions, and may degrade both statistical and computational efficiency. To address these issues, multilinear and tensor-based dimensionality reduction techniques have been proposed, for example through higher-order singular value and tensor decompositions; see  \cite{deLathauwer2000,Lu2008MPCA,Vasilescu2002}, as well as graph- and manifold-based tensor methods.

In recent years, the t-product framework has emerged as a powerful algebraic tool for third-order tensors; see  \cite{Avron2025,ElHachimi2023,ElHachimi2024,Kilmer2013}. This formalism makes it possible to extend many familiar matrix concepts, including the singular value and spectral decompositions, to tensors while preserving a convenient linear-algebra structure via block circulant representations and Fourier transforms. Several numerical linear algebra methods and spectral techniques have been successfully generalized to this setting \cite{Elguide2021,ElHachimi2023,ElHachimi2024}. This suggests that it may be natural to revisit classical dimensionality reduction criteria using the t-product framework.

In this work, we develop a unified trace-based optimization framework for multidimensional dimensionality reduction in the t-product setting. Building on tensor trace operators and generalized eigenvalue problems, we extend several well-known matrix-based methods for dimensionality reduction to third-order tensors. More precisely, we introduce and analyze Multidimensional Principal Component Analysis (MPCA) and Multidimensional Orthogonal Neighborhood Preserving Projections (MONPP), both formulated in terms of tensor quantities. We also derive kernel extensions of these methods that enable modeling of nonlinear structures in multidimensional data, while retaining the computational advantages of the t-product algebra. In addition, we consider multidimensional variants of nonlinear manifold learning techniques such as Laplacian Eigenmaps (MLE) and Locally Linear Embedding (MLLE), and show how these techniques can be expressed within the framework of trace-optimization techniques.

The proposed framework provides a coherent view of linear, kernel, and nonlinear multidimensional reduction methods based on trace criteria and tensor algebra. It clarifies the links between different approaches, identifies the corresponding tensor eigenvalue or generalized eigenvalue problems, and leads to practical algorithms that operate directly on tensor data without vectorization.

The remainder of this paper is organized as follows. Section~\ref{sec:preliminaries} recalls properties of the t-product that are important for our analysis. In Section~\ref{sec:tensor-trace}, we develop some theoretical results on tensor trace optimization. These results form the basis for the proposed multidimensional reduction methods. Section~\ref{sec:MPCA} introduces the Multidimensional Principal Component Analysis (MPCA) method and Section~\ref{sec:MONPP} is discusses the Multidimensional Orthogonal Neighborhood Preserving Projection (MONPP) method. Kernel extensions of these methods are presented in Section~\ref{sec:MKBM}. Section~\ref{sec:MNRM} discusses multidimensional nonlinear methods, including Multidimensional Laplacian Eigenmaps (MLE) and Multidimensional Locally Linear Embedding (MLLE). Section~\ref{sec:NE} reports numerical experiments for several benchmark datasets. These examples illustrate the effectiveness and computational behavior of the proposed approaches. Finally, Section \ref{sec:concl} contains concluding remarks. 

\section{Preliminaries}\label{sec:preliminaries}
This paper is concerned with third-order tensors 
$\mathcal{A} \in \mathbb{R}^{m \times n \times p}$. 
Tensors are denoted by boldface capital calligraphic letters such as 
$\mathcal{A}$. Standard capital letters, such as $A$, are used for matrices; boldface lower case letters, such as $\boldsymbol{a}$, denote tubes; and standard lower case letters, such as $a$, denote vectors. The $(i,j,k)$th entry of the third-order tensor 
$\mathcal{A} \in \mathbb{R}^{m \times n \times p}$ is denoted by 
$\mathcal{A}_{i,j,k}$.

A slice of a third-order tensor ${\mathcal A}$ is obtained by fixing one index. We let $\mathcal{A}(i,:,:) \in \mathbb{R}^{1 \times n \times p}$ denote the $i$th horizontal slice, 
$\mathcal{A}(:,j,:) \in \mathbb{R}^{m \times 1 \times p}$ stands for the $j$th lateral slice, and 
$\mathcal{A}(:,:,k) \in \mathbb{R}^{m \times n \times 1}$ the $k$th frontal slice. Fixing two indices yields a fiber of $\mathcal{A}$. 
A column (1-mode), row (2-mode), and tube (3-mode) fiber are denoted 
by $\boldsymbol{a}_{:jk}$, $\boldsymbol{a}_{i:k}$, and 
$\boldsymbol{a}_{ij:}$, respectively.

The Frobenius inner product of two third-order tensors $\mathcal{A},\mathcal{B} \in \mathbb{R}^{m \times n \times p}$ is defined as
\[
\langle \mathcal{A},\mathcal{B} \rangle
  = \sum_{i=1}^m \sum_{j=1}^n \sum_{k=1}^p 
      \mathcal{A}_{i,j,k} \mathcal{B}_{i,j,k},
\]
and the Frobenius norm of the third-order tensor $\mathcal{A}$ is given by
\[
\|\mathcal{A}\|_F
  = \|\mathcal{A}\|_F=\sqrt{\sum_{i=1}^m \sum_{j=1}^n \sum_{k=1}^p 
           \mathcal{A}_{i,j,k}^2}.
\]

The Fast Fourier Transform (FFT) will be used frequently in this paper. Let $F_\ell \in \mathbb{C}^{\ell \times \ell}$ denote the Discrete 
Fourier Transform (DFT) matrix of order $\ell$. Its elements are given by
\[
(F_\ell)_{k,j} = \frac{1}{\sqrt{\ell}} \,\omega^{(k-1)(j-1)},
\quad k,j = 1,\ldots,\ell,
\]
where $\omega = e^{-2\pi i / \ell}$ and $i=\sqrt{-1}$. The matrix $F_\ell$ is unitary; it satisfies $F_\ell^H F_\ell = I_\ell$,
where the superscript $^H$ denotes transposition and complex conjugation.

The DFT of $v \in \mathbb{C}^\ell$ is given by $\widehat{v} = F_\ell v$. 
If $v \in \mathbb{R}^\ell$, then $\widehat{v}$ satisfies the conjugate symmetry relations
\begin{equation}\label{even conj}
  \widehat{v}_1 \in \mathbb{R}, \qquad 
  {\tt conj}(\widehat{v}_i) = \widehat{v}_{\ell - i + 2},
  \quad i = 2,\ldots,
  \left\lfloor\frac{\ell+1}{2}\right\rfloor,
\end{equation}
where $\lfloor\alpha\rfloor$ denotes the largest integer smaller than or equal to $\alpha\ge 0$.

A vector $\widehat{v}\in{\mathbb C}^\ell$ that satisfies 
\eqref{even conj} is said to be conjugate even. Moreover, if a vector 
$\tilde{v}$ is real and conjugate even, then its inverse DFT 
$v = F_\ell^H \tilde{v}$ also is real and conjugate even. 

Let $\mathcal{A} \in \mathbb{R}^{m \times n \times p}$ be a third-order tensor with frontal slices $A_1, A_2,\ldots,A_p$. The operator 
${\tt Bcirc}$ constructs a block circulant matrix from the frontal slices of $\mathcal{A}$:
\[
{\tt Bcirc}(\mathcal{A})
  = \begin{pmatrix}
      A_1   & A_p   & \cdots & A_2 \\
      A_2   & A_1   & \cdots & A_3 \\
      \vdots& \ddots& \ddots & \vdots \\
      A_p   & A_{p-1} & \cdots & A_1
    \end{pmatrix}
    \in \mathbb{R}^{mp \times np}.
\]
The operator ${\tt MatVec}$ and its inverse ${\tt MatVec}^{-1}$ are defined as
\[
{\tt MatVec}(\mathcal{A})
  = \begin{pmatrix}
      A_1 \\
      \vdots \\
      A_p
    \end{pmatrix}
    \in \mathbb{R}^{mp \times n},
\qquad
{\tt MatVec}^{-1}({\tt MatVec}(\mathcal{A})) = \mathcal{A}.
\]
The block diagonal matrix associated with $\mathcal{A}$ is defined as
\begin{equation}\label{dft9}
  {\tt Bdiag}(\mathcal{A})
    = \operatorname{BlocDiag}(A_1,\ldots,A_p),
\end{equation}
where $A_j$, $j=1,\ldots,p$, are the frontal slices of $\mathcal{A}$.

Let $\widehat{\mathcal{A}}$ denote the tensor obtained by applying the DFT along 
the third mode (along all tubes) of $\mathcal{A}$. In Matlab notation,
\[
\widehat{\mathcal{A}} = {\tt fft}(\mathcal{A},[],3),
\qquad
{\tt ifft}(\widehat{\mathcal{A}},[],3) = \mathcal{A},
\]
where ${\tt fft}$ and ${\tt ifft}$ denote the FFT and inverse FFT functions. The block circulant matrix ${\tt Bcirc}(\mathcal{A})$ associated with the third-order tensor 
$\mathcal{A} \in \mathbb{R}^{m \times n \times p}$ is block diagonalizable using the DFT:
\begin{equation}\label{dft8}
  (F_p \otimes I_m)\,{\tt Bcirc}(\mathcal{A})\,
  (F_p^H \otimes I_n)
  = {\tt Bdiag}(\widehat{\mathcal{A}}).
\end{equation}
If $\mathcal{A}$ is real, then the frontal slices of $\widehat{\mathcal{A}}$ satisfy
\begin{equation}\label{f1}
  \widehat{\mathcal{A}}_1 \in \mathbb{R}^{m \times n}, 
  \qquad
  {\tt conj}(\widehat{\mathcal{A}}_i)
    = \widehat{\mathcal{A}}_{p - i + 2},
  \quad i = 2,\ldots,
  \left\lfloor\frac{p+1}{2}\right\rfloor,
\end{equation}
where ${\tt conj}(\widehat{\mathcal{A}}_i)$ senotes the complex conjugate of the matrix $\widehat{\mathcal{A}}_i$. Kilmer and Martin \cite{Kilmer2011} introduced the following tensor product.

\begin{definition}
Let $\mathcal{A} \in \mathbb{R}^{m \times \ell \times p}$ and 
$\mathcal{B} \in \mathbb{R}^{\ell \times n \times p}$. 
The t-product of $\mathcal{A}$ and $\mathcal{B}$ is the tensor 
$\mathcal{C} \in \mathbb{R}^{m \times n \times p}$ defined by
\[
\mathcal{C}
  = \mathcal{A} * \mathcal{B}
  = {\tt MatVec}^{-1}
      \bigl[\,{\tt Bcirc}(\mathcal{A})\,
             {\tt MatVec}(\mathcal{B})\,\bigr].
\]
\end{definition}

\nd Let $\mathcal{C} = \mathcal{A} * \mathcal{B}$ be the t-product of 
$\mathcal{A} \in \mathbb{R}^{m \times \ell \times p}$ and 
$\mathcal{B} \in \mathbb{R}^{\ell \times n \times p}$. 
Then the FFT of $\mathcal{C}$ can be computed slice-wise as
\[
\widehat{\mathcal{C}} = \widehat{\mathcal{A}} \triangle \widehat{\mathcal{B}},
\]
where $\triangle$ denotes the face-wise (frontal slice-wise) product, i.e.,
\[
\widehat{\mathcal{C}}_i
  = \widehat{\mathcal{A}}_i \widehat{\mathcal{B}}_i,
  \quad i = 1,\ldots,p.
\]

We review some definitions and properties of tensor operations of third-order tensors when using the t-product. They have been shown by Kilmer et al. \cite{Kilmer2011,Kilmer2013} and will be used throughout this paper. 

\begin{definition}
(i) Let $\mathcal{A} \in \mathbb{R}^{m \times n \times p}$.
The t-transpose of $\mathcal{A}$ is the tensor 
$\mathcal{A}^T \in \mathbb{R}^{n \times m \times p}$ obtained by transposing each 
frontal slice and then reversing the order of the transposed frontal slices $A_2^T,\ldots,A_p^T$. (ii) The identity tensor $\mathcal{I}_{mmp}$ is the tensor of size $m \times m \times p$ whose first frontal slice is the $m \times m$ identity matrix and whose other frontal slices are zero matrices. We note that in the Fourier domain all frontal slices of 
$\widehat{\mathcal{I}_{mmp}}$ equal the identity matrix $I_m$. 
(iii) A square tensor $\mathcal{Q} \in \mathbb{R}^{m \times m \times p}$ is said to be orthogonal if
\[
\mathcal{Q}^T * \mathcal{Q}
  = \mathcal{Q} * \mathcal{Q}^T
  = \mathcal{I}_{mmp}.
\]
(iv) The tensor $\mathcal{Q}$ is said to be f-orthogonal if all frontal slices 
$\widehat{Q}_i$, $i=1,\ldots,p$, of 
$\widehat{\mathcal{Q}} = {\tt fft}(\mathcal{Q},[],3)$ are orthogonal matrices. 
(v) A tensor $\mathcal{D} \in \mathbb{R}^{m \times m \times p}$ is said to be f-diagonal if all its frontal slices in the Fourier domain are diagonal matrices.
(vi) A square tensor $\mathcal{A} \in \mathbb{R}^{m \times m \times p}$ is invertible if 
there is a tensor $\mathcal{A}^{-1} \in \mathbb{R}^{m \times m \times p}$ such that
\[
\mathcal{A} * \mathcal{A}^{-1}
  = \mathcal{I}_{mmp}.
\]
\end{definition}

\begin{definition}\label{def2.3}
Let $\mathcal{A} \in \mathbb{R}^{m \times m \times p}$. 
The tensor $\mathcal{A}$ is said to be f-symmetric if all its frontal slices $\widehat{A}_i$, $i=1,\ldots,p$, in the Fourier domain are symmetric matrices. The tensor $\mathcal A$ is said to be 
f-positive-definite (f-positive-semidefinite) if all its frontal slices in the Fourier domain are symmetric positive definite (positive-semidefinite) matrices. 
\end{definition}

The singular value decomposition (SVD) of a matrix can be extended to a third-order tensor in the t-product setting; see, for example, 
\cite{ElHachimi2024,Kilmer2011,Kilmer2013}.

\begin{theorem}\label{theosvd1}
Let $\mathcal{A} \in \mathbb{R}^{m \times n \times p}$ be a real-valued tensor. 
Then there exist tensors 
$\mathcal{U} \in \mathbb{R}^{m \times m \times p}$, 
$\mathcal{S} \in \mathbb{R}^{m \times n \times p}$, and 
$\mathcal{V} \in \mathbb{R}^{n \times n \times p}$ such that
\begin{equation}\label{svd1}
  \mathcal{A}
    = \mathcal{U} * \mathcal{S} * \mathcal{V}^T,
\end{equation}
where $\mathcal{U}$ and $\mathcal{V}$ are f-orthogonal tensors and 
$\mathcal{S}$ is an f-diagonal tensor.
\end{theorem}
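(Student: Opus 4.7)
\medskip

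\noindent\textbf{Proof proposal.} The natural approach is to move into the Fourier domain, exploit that the t-product reduces to the face-wise product there, and assemble the factorization slice by slice from ordinary matrix SVDs. Concretely, I would first apply the FFT along the third mode to obtain $\widehat{\mathcal{A}}={\tt fft}(\mathcal{A},[\,],3)\in\mathbb{C}^{m\times n\times p}$. By \eqref{dft8}, the block diagonalization
\[
  (F_p\otimes I_m)\,{\tt Bcirc}(\mathcal{A})\,(F_p^H\otimes I_n)={\tt Bdiag}(\widehat{\mathcal{A}})
\]
turns the t-product into the face-wise product $\triangle$, so any candidate factorization $\mathcal{A}=\mathcal{U}*\mathcal{S}*\mathcal{V}^T$ is equivalent to $\widehat{\mathcal{A}}_i=\widehat{U}_i\widehat{S}_i\widehat{V}_i^H$ for every $i=1,\ldots,p$.

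Next, for each $i$ I would compute a (thin or full) matrix SVD of the complex frontal slice $\widehat{A}_i=\widehat{U}_i\widehat{S}_i\widehat{V}_i^H$, where $\widehat{U}_i$ and $\widehat{V}_i$ are unitary and $\widehat{S}_i$ is a nonnegative real diagonal matrix of size $m\times n$. Stacking these matrices as frontal slices yields tensors $\widehat{\mathcal{U}}$, $\widehat{\mathcal{S}}$, $\widehat{\mathcal{V}}$ in the Fourier domain that, by construction, satisfy $\widehat{\mathcal{A}}=\widehat{\mathcal{U}}\,\triangle\,\widehat{\mathcal{S}}\,\triangle\,\widehat{\mathcal{V}}^H$, with $\widehat{\mathcal{U}}$ and $\widehat{\mathcal{V}}$ f-orthogonal and $\widehat{\mathcal{S}}$ f-diagonal in the sense of the previous definitions. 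Pulling back to the spatial domain by the inverse FFT defines candidates $\mathcal{U}$, $\mathcal{S}$, $\mathcal{V}$ and, by \eqref{dft8} applied in reverse, gives the identity $\mathcal{A}=\mathcal{U}*\mathcal{S}*\mathcal{V}^T$.

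The hard part, and the only genuinely delicate point, is to argue that the tensors obtained after inverse FFT are \emph{real}. Since $\mathcal{A}$ is real, $\widehat{\mathcal{A}}$ satisfies the conjugate-symmetry relation \eqref{f1}, i.e.\ $\widehat{A}_1$ (and, when $p$ is even, $\widehat{A}_{p/2+1}$) is real and $\widehat{A}_{p-i+2}=\overline{\widehat{A}_i}$ for the remaining indices. My plan is to choose the SVDs compatibly with this pairing: for the self-conjugate slices I pick a real SVD, which exists because the matrix is real; for each conjugate pair $(i,p-i+2)$ I compute an SVD of $\widehat{A}_i$ and then simply define $\widehat{U}_{p-i+2}:=\overline{\widehat{U}_i}$, $\widehat{S}_{p-i+2}:=\widehat{S}_i$, and $\widehat{V}_{p-i+2}:=\overline{\widehat{V}_i}$. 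Taking the complex conjugate of the SVD $\widehat{A}_i=\widehat{U}_i\widehat{S}_i\widehat{V}_i^H$ shows that this indeed yields a valid SVD of $\widehat{A}_{p-i+2}$, and the whole triple $(\widehat{\mathcal{U}},\widehat{\mathcal{S}},\widehat{\mathcal{V}})$ is then conjugate even slice-wise.

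Finally, I would invoke the fact recorded just after \eqref{even conj} that the inverse DFT of a conjugate-even vector is real; applied tube-wise, this shows that $\mathcal{U}={\tt ifft}(\widehat{\mathcal{U}},[\,],3)$, $\mathcal{S}={\tt ifft}(\widehat{\mathcal{S}},[\,],3)$, and $\mathcal{V}={\tt ifft}(\widehat{\mathcal{V}},[\,],3)$ are real tensors of the required sizes. The f-orthogonality of $\mathcal{U}$ and $\mathcal{V}$ and the f-diagonality of $\mathcal{S}$ follow from the unitarity and diagonality of their frontal slices in the Fourier domain, which were built in by construction, and the identity $\mathcal{A}=\mathcal{U}*\mathcal{S}*\mathcal{V}^T$ is preserved by the inverse FFT thanks to \eqref{dft8}. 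This completes the proof.
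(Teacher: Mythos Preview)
Your argument is correct and is exactly the standard construction: pass to the Fourier domain, take slice-wise matrix SVDs, enforce the conjugate-even pairing so that the inverse FFT returns real tensors, and read off f-orthogonality and f-diagonality from the Fourier slices. The paper does not give its own proof of this theorem but cites \cite{ElHachimi2024,Kilmer2011,Kilmer2013}, where precisely this Fourier-domain slice-wise SVD argument is used, so your proposal matches the intended approach.
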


\nd A generalization of eigenvalues and eigenvectors to third-order tensors using the 
t-product has been described in  \cite{ElHachimi2023}.

\begin{definition}
Let $\mathcal{A} \in \mathbb{R}^{m \times m \times p}$ be a third-order tensor. 
A tube ${\bm{\lambda}}_j \in \mathbb{C}^{1 \times 1 \times p}$ is called an 
eigentube of $\mathcal{A}$ corresponding to the eigen-lateral slice 
$\vec{\mathcal{V}}_j \in \mathbb{C}^{m \times 1 \times p}$ if
\begin{equation}\label{teig1}
  \mathcal{A} * \vec{\mathcal{V}}_j
    = \vec{\mathcal{V}}_j * {\bm{\lambda}}_j.
\end{equation}
For each $j = 1,\ldots,m$, let 
$\lambda_j(\widehat{A}_k)$, $k=1,\ldots,p$, denote the $j$th eigenvalue of the $k$-th frontal slice $\widehat{A}_k$ of $\widehat{\mathcal{A}}$. The $j$th eigentube ${\bm{\lambda}}_j$ is defined as
\[
{\bm{\lambda}}_j(1,1,k) = \lambda_j(\widehat{A}_k),
\quad k = 1,\ldots,p,
\]
and the eigenvalues in each frontal slice are ordered so that
\[
|\lambda_\ell(\widehat{A}_k)|
  \ge |\lambda_{\ell+1}(\widehat{A}_k)|,
\quad \ell = 1,\ldots,m-1,
\quad k = 1,\ldots,p.
\]
\end{definition}

\begin{definition}[Tensor f-diagonalization]\label{def16}
A third-order tensor $\mathcal{A} \in \mathbb{R}^{m \times m \times p}$ is said to be  f-diagonalizable if it is similar, via the t-product, to an
f-diagonal tensor, that is, if
\[
\mathcal{A}
  = \mathcal{P} * \mathcal{D} * \mathcal{P}^{-1},
\]
for some invertible tensor 
$\mathcal{P} \in \mathbb{R}^{m \times m \times p}$ and an f-diagonal tensor 
$\mathcal{D} \in \mathbb{R}^{m \times m \times p}$. 
In this case, $\mathcal{P}$ and $\mathcal{D}$ contain the eigenslices and 
eigentubes of $\mathcal{A}$, respectively.
\end{definition}

If the third-order tensor $\mathcal{A}$ is f-symmetric and f-positive-definite, 
then it is f-diagonalizable and all its eigenvalues 
$\lambda_i(\widehat{A}_j)$, $i=1,\ldots,m$, $j=1,\ldots,p$, are real and 
positive \cite{ElHachimi2023}.

\section{Tensor trace optimization problems}\label{sec:tensor-trace}
We first introduce a notion of trace for third-order tensors. It is defined in the Fourier domain.

\medskip
\begin{definition}[f-tensor trace]\label{defTTrace}
Consider a square tensor 
$\mathcal{A} \in \mathbb{R}^{m \times m \times p}$ with frontal slices $A_1,\ldots,A_p$. Let $\widehat{\mathcal{A}} = {\tt fft}(\mathcal{A},[],3)$ and let $\widehat{A}_i$, $i=1,\ldots,p$, denote the frontal slices of $\widehat{\mathcal{A}}$. The frontal trace of $\mathcal{A}$ is defined as
\begin{equation}\label{tr1}
  \operatorname{Trace}_f(\mathcal{A})
    = \frac{1}{p} \sum_{i=1}^{p} \operatorname{Trace}(\widehat{A}_i), 
\end{equation}
where $\operatorname{Trace}({\widehat A}_i)$ denotes the trace of the square matrix ${\widehat A}_i$, which is the $i$th frontal slice of the tensor ${\widehat {\mathcal A}}$. 
\end{definition}

The next result follows directly from the properties of the t-product, the FFT, and the definition of $\operatorname{Trace}_f$.

\begin{proposition}\label{pro1}
Let $\mathcal{A},\mathcal{B} \in \mathbb{R}^{m \times m \times p}$. 
Then
\begin{equation}\label{nm2}
  \operatorname{Trace}_f(\mathcal{A}^T * \mathcal{A})
    = \frac{1}{p} \sum_{i=1}^p \|\widehat{A}_i\|_F^2
    = \|\mathcal{A}\|_F^2,
\end{equation}
\[
  \operatorname{Trace}_f(\mathcal{A}^T * \mathcal{B})
    = \frac{1}{p} \sum_{i=1}^{p} \langle \widehat{A}_i, \widehat{B}_i \rangle
    = \langle \mathcal{A},\mathcal{B}\rangle,
\]
where $\langle \cdot,\cdot\rangle$ denotes the Frobenius inner product.
\end{proposition}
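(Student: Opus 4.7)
My plan is to push every quantity into the Fourier domain along the third mode, apply standard slice-wise matrix identities, and then return to the spatial domain via Parseval's relation. Three ingredients, all recorded in Section~\ref{sec:preliminaries}, suffice: (i) the face-wise product rule $(\widehat{\mathcal{A}*\mathcal{B}})_i = \widehat{A}_i\widehat{B}_i$; (ii) the fact that t-transposition becomes slice-wise conjugate transposition in the Fourier domain, i.e.\ $(\widehat{\mathcal{A}^T})_i = \widehat{A}_i^H$; and (iii) Parseval's identity for the DFT along the third mode.

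The only non-trivial preliminary is (ii). By the definition of the t-transpose, the frontal slices of $\mathcal{A}^T$ are obtained by transposing each $A_i$ and then circularly reversing the resulting sequence (the first slice is fixed and slices $2,\ldots,p$ are reversed). Matrix transposition is tube-wise, so it commutes with the DFT along the third mode and sends $\widehat{A}_i$ to $\widehat{A}_i^T$; circular reversal of a real sequence corresponds, under the DFT, to complex conjugation, by the standard DFT identity that underlies the conjugate-evenness relation \eqref{f1}. Composing the two operations gives $(\widehat{\mathcal{A}^T})_i = \overline{\widehat{A}_i^T} = \widehat{A}_i^H$. Combining with (i) yields $(\widehat{\mathcal{A}^T*\mathcal{B}})_i = \widehat{A}_i^H\widehat{B}_i$, and the per-slice identity $\operatorname{Trace}(X^H Y) = \langle X,Y\rangle$ then produces
\[
\operatorname{Trace}_f(\mathcal{A}^T*\mathcal{B}) = \frac{1}{p}\sum_{i=1}^p \operatorname{Trace}(\widehat{A}_i^H\widehat{B}_i) = \frac{1}{p}\sum_{i=1}^p \langle\widehat{A}_i,\widehat{B}_i\rangle,
\]
which is the middle equality in both statements of the proposition (specialising to $\mathcal{B}=\mathcal{A}$ for \eqref{nm2}).

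For the rightmost equalities, I would apply Parseval tube by tube. For each fixed index pair $(j,k)$, $\widehat{\mathcal{A}}_{jk:}$ is the DFT of the real tube $\mathcal{A}_{jk:}$, so with the {\tt fft} normalisation implicit in \eqref{dft8} one has $\sum_i \overline{(\widehat{\mathcal{A}}_{jk:})_i}\,(\widehat{\mathcal{B}}_{jk:})_i = p\sum_i (\mathcal{A}_{jk:})_i(\mathcal{B}_{jk:})_i$. Summing over all $(j,k)$ and rearranging the triple sum gives $\sum_i \langle\widehat{A}_i,\widehat{B}_i\rangle = p\,\langle\mathcal{A},\mathcal{B}\rangle$, and analogously $\sum_i \|\widehat{A}_i\|_F^2 = p\,\|\mathcal{A}\|_F^2$. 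The factor $1/p$ in Definition~\ref{defTTrace} then cancels, and both claims follow.

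The only real subtlety, and the main potential obstacle, is bookkeeping: one has to track the FFT normalisation so that the factor of $p$ produced by Parseval cancels exactly the $1/p$ from the definition of $\operatorname{Trace}_f$. Once a single convention is fixed throughout, both identities reduce to slice-wise matrix calculus, and no tensor machinery beyond that summarised in Section~\ref{sec:preliminaries} is needed.
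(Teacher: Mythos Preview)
Your proof is correct and is exactly the argument the paper has in mind: the paper does not spell out a proof but merely states that the result ``follows directly from the properties of the t-product, the FFT, and the definition of $\operatorname{Trace}_f$,'' and your three ingredients (the face-wise product rule, $(\widehat{\mathcal{A}^T})_i=\widehat{A}_i^H$, and tube-wise Parseval) are precisely those properties. Your explicit verification of the $H$ versus $T$ issue and of the $1/p$ normalisation is a useful addition, since the paper is informal on both points.
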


\medskip
\nd Let ${\mathcal A}$ be an f-symmetric and f-positive-definite third-order tensor. We will consider optimization problems of the form
\[
\max_{\mathcal{V}} \operatorname{Trace}_f(\mathcal{V}^T * \mathcal{A} * \mathcal{V}),
\]
where the third-order tensor ${\mathcal V}$ satisfies certain 
orthogonality or generalized orthogonality constraints.
These types of problems arise naturally in multidimensional dimensionality reduction. Their solutions are expressed in terms of tensor eigenstructures associated with the t-product.

\medskip
\begin{theorem}\label{theotrace}
Let $\mathcal{A}\in \mathbb{R}^{m \times m \times p}$ be an f-symmetric and f-positive-definite tensor, and let $\mathcal{V}\in \mathbb{R}^{m \times d \times p}$. Then
\begin{equation}\label{maxtrace1}
  \max_{\mathcal{V}^T * \mathcal{V} = \mathcal{I}}
    \operatorname{Trace}_f\left( \mathcal{V}^T * \mathcal{A} * \mathcal{V} \right)
  = \frac{1}{p} \sum_{i=1}^p \sum_{j=1}^{d} \lambda_j(\widehat{A}_i)
  = \frac{1}{p} \sum_{j=1}^d \|{\bm{\lambda}}_j\|_1,
\end{equation}
where for each $i=1,\ldots,p$, $\lambda_1(\widehat{A}_i)\ge\cdots\ge \lambda_m(\widehat{A}_i)$ are the eigenvalues of the frontal slice $\widehat{A}_i$, 
$\lambda_j(\widehat{A}_i)$, $j=1,\ldots,d$, denote its $d$ largest eigenvalues, 
and the $j$th eigentube ${\bm{\lambda}}_j\in\mathbb{R}^{1\times 1\times p}$ is defined as
\begin{equation}\label{etube}
  {\bm{\lambda}}_j(1,1,i) = \lambda_j(\widehat{A}_i), 
  \quad i=1,\ldots,p,
\end{equation}
so that $\|{\bm{\lambda}}_j\|_1 = \sum_{i=1}^p \lambda_j(\widehat{A}_i)$.
\end{theorem}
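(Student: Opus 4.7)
The plan is to pass to the Fourier domain, where the t-product becomes a face-wise product and both the objective and the orthogonality constraint decouple across the $p$ frontal slices. Once the problem is decoupled, each slice-wise subproblem reduces to a classical Hermitian trace maximization, which is handled by the Ky Fan (Rayleigh--Ritz) trace maximum principle.

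First, I would translate the tensor expression $\mathcal{V}^T * \mathcal{A} * \mathcal{V}$ into the Fourier domain. By the slice-wise property of the t-product and the fact that t-transposition corresponds to conjugate transposition of each frontal slice of $\widehat{\mathcal{A}}$, the $i$th frontal slice of the FFT of $\mathcal{V}^T * \mathcal{A} * \mathcal{V}$ equals $\widehat{V}_i^H \widehat{A}_i \widehat{V}_i$. Applying the definition (\ref{tr1}) of $\operatorname{Trace}_f$ then yields
\[
\operatorname{Trace}_f(\mathcal{V}^T * \mathcal{A} * \mathcal{V})
= \frac{1}{p}\sum_{i=1}^{p}\operatorname{Trace}(\widehat{V}_i^H \widehat{A}_i \widehat{V}_i).
\]
The same translation turns the constraint $\mathcal{V}^T * \mathcal{V} = \mathcal{I}$ into the $p$ independent matrix constraints $\widehat{V}_i^H \widehat{V}_i = I_d$ for $i = 1,\ldots,p$.

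Second, since the $p$ terms in the sum are now governed by disjoint sets of variables, the joint maximum equals the sum of the individual maxima. Because $\mathcal{A}$ is f-symmetric and f-positive-definite, each $\widehat{A}_i$ is Hermitian positive definite, hence diagonalizable with real ordered spectrum $\lambda_1(\widehat{A}_i)\ge\cdots\ge\lambda_m(\widehat{A}_i) > 0$. The Ky Fan trace maximum principle then gives
\[
\max_{\widehat{V}_i^H \widehat{V}_i = I_d}\operatorname{Trace}(\widehat{V}_i^H \widehat{A}_i \widehat{V}_i)
= \sum_{j=1}^{d}\lambda_j(\widehat{A}_i),
\]
attained when the columns of $\widehat{V}_i$ are orthonormal eigenvectors of $\widehat{A}_i$ associated with its $d$ largest eigenvalues. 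Summing over $i$, dividing by $p$, and reinterpreting $\sum_{i=1}^{p}\lambda_j(\widehat{A}_i)$ as $\|\bm{\lambda}_j\|_1$ via (\ref{etube}) and positivity of the eigenvalues, delivers both equalities in (\ref{maxtrace1}).

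The step that I expect to require the most care is ensuring that the slice-wise optimizers can be assembled into a \emph{real} tensor $\mathcal{V}$. The $p$ block problems are decoupled in the Fourier domain, but the inverse FFT returns a real tensor only if $\widehat{\mathcal{V}}$ obeys the conjugate-symmetry pattern analogous to (\ref{f1}). Since the same pattern holds for $\widehat{\mathcal{A}}$ by realness of $\mathcal{A}$, the paired slices $\widehat{A}_i$ and $\widehat{A}_{p-i+2}$ are complex conjugates, share the same eigenvalues, and their eigenvectors can be chosen as complex conjugates of each other; for the self-paired slices ($i=1$, and $i = p/2+1$ when $p$ is even) a real orthonormal eigenbasis is available directly. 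This coupling restricts the admissible optima but does not change the objective value, so the upper bound derived above is attained by a real tensor $\mathcal{V}$, completing the proof.
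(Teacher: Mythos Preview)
Your proposal is correct and follows essentially the same route as the paper: transform to the Fourier domain, decouple both the objective and the constraint into $p$ independent matrix problems, apply the classical trace-maximization result on each slice, and reassemble. Your additional paragraph on enforcing the conjugate-symmetry pattern so that the slice-wise optimizers assemble into a \emph{real} tensor $\mathcal{V}$ is in fact more careful than the paper's own argument, which simply asserts the existence of an optimizer without checking this point.
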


\begin{proof}
By Definition~\ref{defTTrace} and by properties of the FFT and the t-product, we have
\[
\operatorname{Trace}_f\left( \mathcal{V}^T * \mathcal{A} * \mathcal{V} \right)
  = \frac{1}{p} \sum_{i=1}^p 
      \operatorname{Trace}\Bigl( 
        \bigl(\widehat{\mathcal{V}^T * \mathcal{A} * \mathcal{V}}\bigr)_i
      \Bigr)
  = \frac{1}{p} \sum_{i=1}^p 
      \operatorname{Trace}\left( 
        \widehat{V}_i^T \widehat{A}_i \widehat{V}_i
      \right),
\]
where $\widehat{\mathcal{V}} = {\tt fft}(\mathcal{V},[],3)$ and 
$\widehat{V}_i$ denotes its $i$-th frontal slice. The constraint 
$\mathcal{V}^T * \mathcal{V} = \mathcal{I}$ is equivalent, in the Fourier domain, to
\[
  \widehat{V}_i^T \widehat{V}_i = I_d,\quad i=1,\ldots,p.
\]
Therefore,
\[
\max_{\mathcal{V}^T * \mathcal{V} = \mathcal{I}}
  \operatorname{Trace}_f\left( \mathcal{V}^T * \mathcal{A} * \mathcal{V} \right)
= \frac{1}{p}\sum_{i=1}^p
   \max_{\widehat{V}_i^T \widehat{V}_i = I_d}
     \operatorname{Trace}\left( 
       \widehat{V}_i^T \widehat{A}_i \widehat{V}_i
     \right).
\]
Since each $\widehat{A}_i$ is symmetric and positive-definite, the classical matrix result applies:
\[
  \max_{\widehat{V}_i^T \widehat{V}_i = I_d}
    \operatorname{Trace}(\widehat{V}_i^T \widehat{A}_i \widehat{V}_i)
  = \sum_{j=1}^d \lambda_j(\widehat{A}_i),
\]
where $\lambda_j(\widehat{A}_i)$, $j=1,\ldots,d$, are the $d$ largest eigenvalues of $\widehat{A}_i$. Hence
\[
  \max_{\mathcal{V}^T * \mathcal{V} = \mathcal{I}}
    \operatorname{Trace}_f\left( \mathcal{V}^T * \mathcal{A} * \mathcal{V} \right)
  = \frac{1}{p} \sum_{i=1}^p \sum_{j=1}^d \lambda_j(\widehat{A}_i).
\]
Grouping terms by eigentubes yields
\[
  \frac{1}{p} \sum_{i=1}^p \sum_{j=1}^d \lambda_j(\widehat{A}_i)
  = \frac{1}{p} \sum_{j=1}^d \|{\bm{\lambda}}_j\|_1,
\]
with ${\bm{\lambda}}_j$ defined by \eqref{etube}.
An optimizer $\mathcal{V}$ can be constructed by choosing each $\widehat{V}_i$ to have as columns the eigenvectors of $\widehat{A}_i$ associated with the $d$ largest eigenvalues.
\end{proof}

\medskip
\nd We now extend this result to the case when the constraint involves an f-positive-definite tensor.

\medskip
\begin{theorem}\label{theotracegen}
Let $\mathcal{A},\mathcal{B} \in \mathbb{R}^{m \times m \times p}$ be f-symmetric and f-positive-definite tensors, and let $\mathcal{V}\in \mathbb{R}^{m \times d \times p}$. Then
\begin{equation}\label{maxtracegen1}
  \max_{\mathcal{V}^T * \mathcal{B} * \mathcal{V} = \mathcal{I}}
    \operatorname{Trace}_f \left( \mathcal{V}^T * \mathcal{A} * \mathcal{V} \right)
  = \frac{1}{p} \sum_{i=1}^p \sum_{j=1}^{d} \mu_{i,j},
\end{equation}
where for each $i=1,\ldots,p$, the
$\mu_{i,1}\ge \cdots \ge \mu_{i,m}$ are the generalized eigenvalues of the matrix pair $(\widehat{A}_i,\widehat{B}_i)$. In particular, $\mu_{i,j}$, $j=1,\ldots,d$, denote the $d$ largest generalized eigenvalues. Here 
$\widehat{\mathcal{A}}$ and $\widehat{\mathcal{B}}$ are obtained by applying the FFT along the third mode. Moreover, $(\widehat{A}_i,\widehat{B}_i)$ denote the $i$th pair of frontal slices. 
\end{theorem}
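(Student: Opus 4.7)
The plan is to mirror the strategy of the proof of Theorem~\ref{theotrace}, namely to pass to the Fourier domain along the third mode so that the tensor generalized trace maximization decouples into $p$ independent classical matrix generalized trace maximizations, and then to invoke the standard matrix result for each frontal slice. First I would use the face-wise product identity for the t-product together with Definition~\ref{defTTrace} to rewrite
\[
  \operatorname{Trace}_f(\mathcal{V}^T * \mathcal{A} * \mathcal{V})
    = \frac{1}{p}\sum_{i=1}^{p}
        \operatorname{Trace}\bigl(\widehat{V}_i^T \widehat{A}_i \widehat{V}_i\bigr),
\]
and, using the same face-wise identity applied to $\mathcal{V}^T*\mathcal{B}*\mathcal{V}=\mathcal{I}$, to restate the constraint slice-by-slice as
\[
  \widehat{V}_i^T \widehat{B}_i \widehat{V}_i = I_d,
  \qquad i=1,\ldots,p.
\]
Since the $p$ slice-wise constraints are independent, the maximization factorizes into $p$ independent matrix problems, as in the proof of Theorem~\ref{theotrace}.

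Next, for each fixed index $i$ I would handle the classical matrix problem
\[
  \max_{\widehat{V}_i^T \widehat{B}_i \widehat{V}_i = I_d}
    \operatorname{Trace}\bigl(\widehat{V}_i^T \widehat{A}_i \widehat{V}_i\bigr).
\]
Because $\mathcal{B}$ is f-positive-definite, $\widehat{B}_i$ is symmetric positive definite, so it admits a symmetric square root $\widehat{B}_i^{1/2}$. Substituting $\widehat{W}_i = \widehat{B}_i^{1/2}\widehat{V}_i$ converts the constraint into $\widehat{W}_i^T \widehat{W}_i = I_d$ and the objective into
$\operatorname{Trace}(\widehat{W}_i^T \widehat{B}_i^{-1/2} \widehat{A}_i \widehat{B}_i^{-1/2} \widehat{W}_i)$. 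The reduced matrix $\widehat{B}_i^{-1/2}\widehat{A}_i\widehat{B}_i^{-1/2}$ is symmetric positive definite, so the classical Ky Fan trace maximization (used already in Theorem~\ref{theotrace}) yields a maximum equal to the sum of its $d$ largest eigenvalues. A direct identification shows that these eigenvalues coincide with the $d$ largest generalized eigenvalues $\mu_{i,1}\ge\cdots\ge\mu_{i,d}$ of the pencil $(\widehat{A}_i,\widehat{B}_i)$.

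Summing over $i=1,\ldots,p$ and dividing by $p$ produces the formula \eqref{maxtracegen1}. An explicit optimizer $\mathcal{V}$ can be recovered by setting the columns of each $\widehat{V}_i$ to be $\widehat{B}_i$-orthonormal generalized eigenvectors of $(\widehat{A}_i,\widehat{B}_i)$ associated with $\mu_{i,1},\ldots,\mu_{i,d}$, and then applying the inverse FFT along the third mode to assemble $\mathcal{V}$.

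The argument is essentially routine once Theorem~\ref{theotrace} is in hand, so there is no serious obstacle; the only point that deserves care is justifying the square-root substitution and the identification of the eigenvalues of $\widehat{B}_i^{-1/2}\widehat{A}_i\widehat{B}_i^{-1/2}$ with the generalized eigenvalues of $(\widehat{A}_i,\widehat{B}_i)$. Both steps rely on the f-positive-definiteness of $\mathcal{B}$ ensuring invertibility and the existence of a symmetric square root in every Fourier slice, which is precisely the role played by the hypothesis of Definition~\ref{def2.3}.
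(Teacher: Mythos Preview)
Your proposal is correct and follows essentially the same route as the paper: pass to the Fourier domain, decouple the tensor problem into $p$ independent matrix generalized trace maximizations, solve each via the $d$ largest generalized eigenvalues of $(\widehat{A}_i,\widehat{B}_i)$, and sum. The only difference is that you spell out the square-root substitution $\widehat{W}_i=\widehat{B}_i^{1/2}\widehat{V}_i$ to justify the matrix step, whereas the paper simply cites it as the known classical result; this is extra detail, not a different approach.
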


\begin{proof}
By Definition~\ref{defTTrace} and properties of the t-product, we have
\[
  \operatorname{Trace}_f(\mathcal{V}^T * \mathcal{A} * \mathcal{V})
    = \frac{1}{p}\sum_{i=1}^p 
        \operatorname{Trace}\left(
          (\widehat{\mathcal{V}^T * \mathcal{A} * \mathcal{V}})_i
        \right)
    = \frac{1}{p}\sum_{i=1}^p 
        \operatorname{Trace}\left(
          \widehat{V}_i^T \widehat{A}_i \widehat{V}_i
        \right),
\]
where $\widehat{\mathcal{V}} = {\tt fft}(\mathcal{V},[],3)$ and $\widehat{V}_i$ is the $i$th frontal slice of $\widehat{\mathcal{V}}$. The constraint 
$\mathcal{V}^T * \mathcal{B} * \mathcal{V} = \mathcal{I}$ is equivalent to
\[
  \widehat{V}_i^T \widehat{B}_i \widehat{V}_i = I_d,
  \quad i=1,\ldots,p.
\]
Therefore,
\[
\max_{\mathcal{V}^T * \mathcal{B} * \mathcal{V} = \mathcal{I}}
  \operatorname{Trace}_f(\mathcal{V}^T * \mathcal{A} * \mathcal{V})
= \frac{1}{p}\sum_{i=1}^p
   \max_{\widehat{V}_i^T \widehat{B}_i \widehat{V}_i = I_d}
     \operatorname{Trace}(\widehat{V}_i^T \widehat{A}_i \widehat{V}_i).
\]
Since the matrices $\widehat{A}_i$ and $\widehat{B}_i$ are symmetric and $\widehat{B}_i$ is positive-definite, we obtain for each $i$ the matrix optimization problem
\[
  \max_{\widehat{V}_i^T \widehat{B}_i \widehat{V}_i = I_d}
    \operatorname{Trace}(\widehat{V}_i^T \widehat{A}_i \widehat{V}_i).
\]
It is solved by letting $\widehat{V}_i$ be made up of the $d$ generalized eigenvectors associated with the $d$ largest generalized eigenvalues
$\mu_{i,1},\ldots,\mu_{i,d}$ of the pair $(\widehat{A}_i,\widehat{B}_i)$. The optimal value equals $\sum_{j=1}^d \mu_{i,j}$. Summing over $i$ yields \eqref{maxtracegen1}.
\end{proof}

\begin{remark}
Using Lagrange multipliers directly at the tensor level, the first-order optimality condition for the problem in Theorem~\ref{theotracegen} leads to the generalized eigentube equation
\[
  \mathcal{A} * \mathcal{V} = \mathcal{B} * \mathcal{V} * \mathcal{D},
\]
where $\mathcal{D}$ is an f-diagonal tensor whose tubes ${\bm{\lambda}}_j$, $j=1,\ldots,d$, are generalized eigentubes of the pair $(\mathcal{A},\mathcal{B})$. Equivalently,
\[
  \mathcal{A} * \vec{\mathcal{V}}_j
    = \mathcal{B} * \vec{\mathcal{V}}_j * {\bm{\lambda}}_j,
  \quad j=1,\ldots,d,
\]
where $\vec{\mathcal{V}}_j$ denotes the $j$th lateral slice of 
$\mathcal{V}$.
\end{remark}

Analogous results hold for related minimization problems. We first consider the minimization problem that is obtained by replacing maximization by minimization in \eqref{theotracegen}. The solution of  problem will be used below.

\begin{theorem}\label{theotracemin}
Let $\mathcal{A}\in \mathbb{R}^{m \times m \times p}$ be an f-symmetric and f-positive-definite tensor, and let $\mathcal{V}\in \mathbb{R}^{m \times d \times p}$. Then
\begin{equation}\label{mintrace1}
  \min_{\mathcal{V}^T * \mathcal{V} = \mathcal{I}}
    \operatorname{Trace}_f\left( \mathcal{V}^T * \mathcal{A} * \mathcal{V} \right)
  = \frac{1}{p}\sum_{i=1}^p \sum_{j=1}^{d} \widetilde{\lambda}_j(\widehat{A}_i),
\end{equation}
where, for each $i=1,\ldots,p$, 
$\widetilde{\lambda}_1(\widehat{A}_i)\le \cdots\le \widetilde{\lambda}_m(\widehat{A}_i)$ are the eigenvalues of $\widehat{A}_i$, and 
$\widetilde{\lambda}_j(\widehat{A}_i)$, $j=1,\ldots,d$, denote the $d$ smallest eigenvalues of $\widehat{A}_i$.
\end{theorem}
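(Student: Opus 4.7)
The plan is to mirror the argument used in the proof of Theorem~\ref{theotrace}, replacing Ky Fan's maximum principle by its minimum counterpart for symmetric matrices. Because the f-trace, the t-product, and the orthogonality constraint $\mathcal{V}^T * \mathcal{V} = \mathcal{I}$ all decouple frontal-slice-wise in the Fourier domain (exactly as exploited in Theorem~\ref{theotrace}), the tensor minimization reduces to $p$ independent matrix trace minimizations, and I will combine the slice-wise optima to obtain \eqref{mintrace1}.

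First, I would invoke Definition~\ref{defTTrace} together with the face-wise product rule $\widehat{\mathcal{V}^T * \mathcal{A} * \mathcal{V}}_i = \widehat{V}_i^T \widehat{A}_i \widehat{V}_i$ to rewrite
\[
\operatorname{Trace}_f(\mathcal{V}^T * \mathcal{A} * \mathcal{V})
= \frac{1}{p}\sum_{i=1}^{p}\operatorname{Trace}\bigl(\widehat{V}_i^T \widehat{A}_i \widehat{V}_i\bigr),
\]
where $\widehat{V}_i$, $\widehat{A}_i$ are the $i$th frontal slices of the FFTs of $\mathcal{V}$ and $\mathcal{A}$. Since $\mathcal{V}^T * \mathcal{V} = \mathcal{I}$ is equivalent, via \eqref{dft8}, to $\widehat{V}_i^T \widehat{V}_i = I_d$ for every $i$, the objective separates and the problem reduces to the $p$ uncoupled minimizations $\min_{\widehat{V}_i^T \widehat{V}_i = I_d}\operatorname{Trace}(\widehat{V}_i^T \widehat{A}_i \widehat{V}_i)$. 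The f-symmetry and f-positive-definiteness of $\mathcal{A}$ (Definition~\ref{def2.3}) guarantee that each $\widehat{A}_i$ is a symmetric positive-definite matrix. The classical trace minimization result for symmetric matrices then gives
\[
\min_{\widehat{V}_i^T \widehat{V}_i = I_d} \operatorname{Trace}(\widehat{V}_i^T \widehat{A}_i \widehat{V}_i) = \sum_{j=1}^{d} \widetilde{\lambda}_j(\widehat{A}_i),
\]
attained by taking the columns of $\widehat{V}_i$ to be orthonormal eigenvectors of $\widehat{A}_i$ associated with the $d$ smallest eigenvalues. Summing over $i$ yields the claimed value.

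The only genuinely delicate point, and what I would regard as the main (though mild) obstacle, is to confirm that the slice-wise optimum is attained by a \emph{real} tensor $\mathcal{V}$, not merely by an arbitrary family of complex matrices $\{\widehat{V}_i\}$. Indeed, \eqref{f1} requires the frontal slices of $\widehat{\mathcal{V}}$ to satisfy the conjugate-symmetry relation $\widehat{V}_{p-i+2} = \operatorname{conj}(\widehat{V}_i)$, so the $p$ slice-wise problems are not truly independent. I would address this by noting that, since $\mathcal{A}$ is real, the same symmetry holds for $\widehat{A}_i$, which implies that the spectra of $\widehat{A}_i$ and $\widehat{A}_{p-i+2}$ coincide and that eigenvector bases can be chosen in complex-conjugate pairs. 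Picking the eigenvectors for indices $i=1,\ldots,\lfloor(p+1)/2\rfloor$ and defining the remaining slices by conjugation produces a conjugate-even family whose inverse FFT is a real tensor that satisfies the orthogonality constraint and attains the bound, thereby completing the proof.
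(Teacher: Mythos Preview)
Your proof is correct and follows essentially the same route as the paper's: decouple via the FFT into $p$ slice-wise trace minimizations and apply the classical Ky Fan minimum principle, then sum. You additionally address the conjugate-symmetry constraint needed to guarantee a real optimizer $\mathcal{V}$, a point the paper's proof (and that of Theorem~\ref{theotrace}) leaves implicit.
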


\begin{proof}
The proof is analogous to that of Theorem~\ref{theotrace}, except that in the matrix problems 
\[
  \min_{\widehat{V}_i^T \widehat{V}_i = I_d}
    \operatorname{Trace}(\widehat{V}_i^T \widehat{A}_i \widehat{V}_i),
\]
the optimal value is given by the sum of the $d$ smallest eigenvalues of 
$\widehat{A}_i$. Summing over $i$ and using the definition of 
$\operatorname{Trace}_f$ gives \eqref{mintrace1}.
\end{proof}

We note in passing that the minimization problem that is obtained by replacing maximization in \eqref{maxtracegen1} by minimization can be solved by selecting the $d$ smallest generalized eigenvalues of each pair 
$(\widehat{A}_i,\widehat{B}_i)$, $i=1,\ldots,p$.

\section{Multidimensional Principal Component Analysis (MPCA) via the t-product}\label{sec:MPCA}
Consider a data set of lateral slices 
$\vec{\mathcal X}_1,\ldots, \vec{\mathcal X}_n \in {\mathbb R}^{m \times 1 \times p}$ and define
\[
  \mathcal X=\bigl[\vec{\mathcal X}_1, \ldots,\vec{\mathcal X}_n\bigr] 
  \in {\mathbb R}^{m \times n \times p}.
\]
Let
\[
  \mathcal V = \bigl[\vec{\mathcal V}_1, \ldots,\vec{\mathcal V}_d\bigr] 
  \in {\mathbb R}^{m \times d \times p},
  \qquad 1\le d \ll m,
\]
be a tensor that defines a mapping to a lower-dimensional subspace. We discuss the choice of this tensor below. The tensor $\mathcal Y$ obtained by mapping $\mathcal X$ to a lower-dimensional space is given by
\begin{equation}\label{proj1}
  \mathcal Y= \mathcal V^T * \mathcal X \in {\mathbb R}^{d \times n \times p}.
\end{equation}

Linear dimensionality reduction techniques apply a linear transformation to the high-dimensional data $\mathcal X$ to obtain a lower-dimensional representation that preserves specific desired properties of the original data. The tensor $\mathcal V$ in \eqref{proj1} is determined by solving an optimization problem designed to preserve, in the reduced space, either the variance of the data, the neighborhood structure, or some supervised discriminative criterion. Different objectives lead to different methods. This section focuses on an unsupervised variance-maximizing method that yields MPCA.

Once the mapping tensor $\mathcal V$ is determined, each lateral slice 
$\vec{\mathcal X}_i$ is mapped according to
\[
  \vec{\mathcal Y}_i = \mathcal V^T * \vec{\mathcal X}_i,\quad i=1,\ldots,n.
\]
If $\mathcal V$ is f-orthogonal, that is, if $\mathcal V^T * \mathcal V = \mathcal I$, then $\mathcal Y$ represents an f-orthogonal projection of 
$\mathcal X$ onto the subspace generated by the lateral slices
$\{\vec{\mathcal V}_1,\ldots,\vec{\mathcal V}_d\}$.

In the matrix case, Principal Component Analysis (PCA) is a classical unsupervised method that transforms a set of correlated variables into a new set of linearly uncorrelated variables referred to as principal components. They are ordered so that the first components capture most of the variance of the data, while components associated with smaller variance often are discarded to achieve dimensionality reduction. The principal components are uncorrelated but not necessarily statistically independent, unless the data are Gaussian; see, for example, \cite{Jolliffe2002} for a discussion.

The tensor analogue considered here is a linear mapping with the aim to compute an f-orthogonal tensor 
$\mathcal V\in{\mathbb R}^{m\times d\times p}$, with $1\le d\ll m$, that maximizes the variance of the mapped slices. Let
\[
  \vec{\mathcal M}=\frac{1}{n} \sum_{j=1}^n \vec{\mathcal X}_j
\]
denote the mean of the data and define the centered tensor
\[
  \overline{\mathcal X}
  =\bigl[\vec{\mathcal X}_1-\vec{\mathcal M},\ldots,\vec{\mathcal X}_n-\vec{\mathcal M}\bigr]
  \in {\mathbb R}^{m \times n \times p}.
\]
The variance of the mapped data can be expressed as
\[
  \sum_{i=1}^n \left\Vert 
    \mathcal V^T *\bigl(\vec{\mathcal X}_i-\vec{\mathcal M}\bigr)
  \right\Vert_F^2
  = \sum_{i=1}^n \left\Vert 
    \mathcal V^T *\vec{\overline{\mathcal X}}_i
  \right\Vert_F^2,
\]
where $\vec{\overline{\mathcal X}}_i$ denotes the $i$th lateral slice of $\overline{\mathcal X}$. MPCA seeks to solve the optimization problem
\begin{equation}\label{tpca11}
  \max_{\mathcal V^T * \mathcal V=\mathcal I}
    \sum_{i=1}^n 
      \left\Vert \mathcal V^T *\vec{\overline{\mathcal X}}_i \right\Vert_F^2.
\end{equation}
Using the Frobenius inner product and Proposition~\ref{pro1}, we can write the sum in \eqref{tpca11} as a tensor trace:
\[
  \sum_{i=1}^n 
    \left\Vert \mathcal V^T *\vec{\overline{\mathcal X}}_i \right\Vert_F^2
  = \sum_{i=1}^n 
      \left\langle 
        \mathcal V^T *\vec{\overline{\mathcal X}}_i, 
        \mathcal V^T *\vec{\overline{\mathcal X}}_i
      \right\rangle
  = \operatorname{Trace}_f\bigl(
      \mathcal V^T * \overline{\mathcal X} * \overline{\mathcal X}^T * \mathcal V
    \bigr).
\]
Therefore the MPCA optimization problem \eqref{tpca11} can be written as
\begin{equation}\label{tpca2}
  \max_{\mathcal V^T * \mathcal V=\mathcal I}
    \operatorname{Trace}_f\bigl(
      \mathcal V^T * \overline{\mathcal X} * \overline{\mathcal X}^T * \mathcal V
    \bigr).
\end{equation}

Introduce the tensor
\[
  \mathcal A
    = \overline{\mathcal X} * \overline{\mathcal X}^T
    \in {\mathbb R}^{m \times m \times p}.
\]	
Then $\mathcal A$ is f-symmetric and f-positive semidefinite, and the MPCA problem \eqref{tpca2} can be expressed as
\[
  \max_{\mathcal V^T * \mathcal V=\mathcal I}
    \operatorname{Trace}_f\bigl(
      \mathcal V^T * \mathcal A * \mathcal V
    \bigr).
\]

\nd By Theorem~\ref{theotrace}, its optimal value is
\[
  \max_{\mathcal V^T * \mathcal V=\mathcal I}
    \operatorname{Trace}_f\bigl(
      \mathcal V^T * \mathcal A * \mathcal V
    \bigr)
  = \frac{1}{p}\sum_{i=1}^p \sum_{j=1}^{d} \lambda_j(\widehat{A}_i),
\]
where $\widehat{\mathcal A} = {\tt fft}(\mathcal A,[],3)$ and 
$\widehat{A}_i$ denotes the $i$th frontal slice, with its eigenvalues ordered according to
$\lambda_1(\widehat{A}_i)\ge \ldots\ge\lambda_m(\widehat{A}_i)$.

\section{Multidimensional orthogonal neighborhood preserving projections}\label{sec:MONPP}
Multidimensional Orthogonal Neighborhood Preserving Projection (MONPP) extends the matrix-based ONPP method in  \cite{Kokiopoulou2007} to tensor-valued data. The aim of this projection (or mapping) is to reduce the dimensionality of high-order datasets through a linear tensor mapping while preserving local neighborhood relationships. A key feature inherited from ONPP is that, after mapping, each sample must be reconstructed from its neighbors using exactly the same reconstruction weights as in the original space.

\subsection{Description of the method}
Let 
\[
\mathcal X = [\vec{\mathcal X}_1,\ldots,\vec{\mathcal X}_n] \in \mathbb{R}^{m \times n \times p},
\]
where each lateral slice $\vec{\mathcal X}_\ell \in \mathbb{R}^{m \times 1 \times p}$ represents one sample. Let $\mathcal V \in \mathbb{R}^{m \times d \times p}$, with $1\le d \ll m$, denote the mapping tensor, and let $\mathcal W \in \mathbb{R}^{n \times n \times p}$ be a third-order tensor of reconstruction weights. The mapped data are given by
\[
\mathcal Y = \mathcal V^T * \mathcal X \in \mathbb{R}^{d \times n \times p}.
\]
In analogy with matrix ONPP, MONPP seeks to determine the tensor $\mathcal V$ by solving the constrained minimization problem
\begin{equation}\label{monpp1}
\min_{\mathcal V^T * \mathcal V = \mathcal I} \left\| \mathcal V^T * \mathcal X * \bigl(\mathcal I - \mathcal W^T\bigr) \right\|_F^2,
\end{equation}
where $\mathcal I \in \mathbb{R}^{n \times n \times p}$ is the identity tensor and the t-transpose is taken in the sense of Definition~2.1.
Using the identity $\|\mathcal Z\|_F^2 = \operatorname{Trace_f}(\mathcal Z^T * \mathcal Z)$ and properties of the t-product, the objective function \eqref{monpp1} can be written as
\[
\begin{aligned}
\left\|\mathcal V^T * \mathcal X * (\mathcal I - \mathcal W^T)\right\|_F^2
&= \operatorname{Trace_f}\left[ \bigl(\mathcal V^T * \mathcal X * (\mathcal I - \mathcal W^T)\bigr)^T * \mathcal V^T * \mathcal X * (\mathcal I - \mathcal W^T)\right] \\
&= \operatorname{Trace_f}\left[ (\mathcal I - \mathcal W) * \mathcal X^T * \mathcal V * \mathcal V^T * \mathcal X * (\mathcal I - \mathcal W^T)\right].
\end{aligned}
\]
Using the constraint $\mathcal V^T * \mathcal V = \mathcal I$, this expression simplifies to
\[
\left\|\mathcal V^T * \mathcal X * (\mathcal I - \mathcal W^T)\right\|_F^2
= \operatorname{Trace_f}\left( \mathcal V^T * \mathcal M * \mathcal V\right),
\]
where
\begin{equation}\label{monpp4}
\mathcal M = \mathcal X * (\mathcal I - \mathcal W^T) * (\mathcal I - \mathcal W) * \mathcal X^T \in \mathbb{R}^{m \times m \times p}.
\end{equation}
Hence, the MONPP problem can be expressed as
\begin{equation}\label{monpp3}
\min_{\mathcal V^T * \mathcal V = \mathcal I} \operatorname{Trace_f}\left( \mathcal V^T * \mathcal M * \mathcal V \right).
\end{equation}
The tensor $\mathcal M$ is f-symmetric and f-positive-semidefinite. Transformation to the Fourier domain gives the tensors $\widehat{\mathcal X} = {\tt fft}(\mathcal X,[\,],3)$ and $\widehat{\mathcal W} = {\tt fft}(\mathcal W,[\,],3)$. Let $\widehat X_i$ and $\widehat W_i$ denote the frontal slices of $\widehat{\mathcal X}$ and $\widehat{\mathcal W}$, respectively. Then the frontal slices of $\widehat{\mathcal M} = {\tt fft}(\mathcal M,[\,],3)$ are given by
\[
\widehat M_i = \widehat X_i (I - \widehat W_i^T)(I - \widehat W_i)\widehat X_i^T,\qquad i=1,\ldots,p.
\]
For any vector $z$, we have
\[
z^T \widehat M_i z = \left\|(I - \widehat W_i)\widehat X_i^T z\right\|_2^2 \ge 0.
\]
Since $\widehat M_i^T = \widehat M_i$, it follows that $\mathcal M$ is f-symmetric and f-positive-semidefinite in the sense of Definition~\ref{def2.3}. It follows that the minimization problem \eqref{monpp3} is a special case of Theorem~\ref{theotracemin} with $\mathcal A = \mathcal M$. We obtain
\begin{equation}\label{mintrace}
\min_{\mathcal V^T * \mathcal V = \mathcal I} \operatorname{Trace_f} \left( \mathcal V^T * \mathcal M * \mathcal V \right) 
= \frac{1}{p} \sum_{i=1}^p \sum_{j=1}^d \widetilde \lambda_j(\widehat M_i)
= \frac{1}{p} \sum_{j=1}^d \left\|\widetilde{\bm \lambda}_j\right\|_1,
\end{equation}
where the $\widetilde \lambda_j(\widehat M_i)$, $j=1,\ldots,d$, denote the $d$ smallest eigenvalues of the matrix $\widehat M_i$ and 
\[
\left\|\widetilde{\bm \lambda}_j\right\|_1 = \sum_{i=1}^p \widetilde \lambda_j(\widehat M_i).
\]
It follows that a tensor $\mathcal V$ that solves \eqref{monpp3} can be determined by letting, for each $i=1,\ldots,p$, the frontal slice $\widehat V_i$ of $\widehat{\mathcal V} = {\tt fft}(\mathcal V,[\,],3)$ be formed by the eigenvectors associated with the $d$ smallest eigenvalues of $\widehat M_i$.

It remains to construct the weight tensor $\mathcal W$. This can be done similarly as for the matrix ONPP by using a locally linear embedding. Given the data tensor $\mathcal X$, one first defines, for each sample index $\ell$, a set of $k$ nearest neighbors within an appropriate distance, for example by using the Frobenius norm of the corresponding lateral slices. Reconstruction weights are then determined so that each sample is approximated by a linear combination of its neighbors with coefficients that sum to one, and are set to zero for non-neighbors. The matrices of weights $\widehat W_i$ that appear in 
\[
\widehat M_i = \widehat X_i (I - \widehat W_i^T)(I - \widehat W_i)\widehat X_i^T,\qquad i=1,\ldots,p,
\]
are computed according to the procedure described in \cite{Kokiopoulou2007} and applied independently to each frontal slice 
$\widehat X_i$ of the tensor $\widehat{\mathcal X} = {\tt fft}(\mathcal X,[\,],3)$. Once the slices $\widehat W_i$ have been determined, the inverse Fourier transform yields the tensor $\mathcal W$, and the generalized eigenproblems associated with the matrices $\widehat M_i$ completely characterize the MONPP mapping tensor $\mathcal V$.

\section{Multidimensional kernel-based methods}\label{sec:MKBM}
Kernel methods achieve nonlinear dimensionality reduction by implicitly mapping the data into a feature space of high (possibly infinite) dimension. Linear techniques such as PCA can then be applied effectively to elements in the feature space; see, e.g., \cite{Muller2001,Shawe2004,Scholkopf1998}. 

In our tensor setting, we consider an implicitly defined nonlinear feature map
\[
\Phi: \mathbb{R}^{m \times 1 \times p} \longrightarrow \mathcal H,
\]
where $\mathcal H$ is a real Hilbert space (the feature space). Given data slices
\[
\vec{\mathcal X}_1,\ldots, \vec{\mathcal X}_n \in \mathbb{R}^{m \times 1 \times p},
\]
we collect them into the tensor
\[
\mathcal X=[\vec{\mathcal X}_1, \ldots,\vec{\mathcal X}_n] \in \mathbb{R}^{m \times n \times p}.
\]
The feature-space representation of the data then is
\[
\Psi = [\Phi(\vec{\mathcal X}_1),\ldots,\Phi(\vec{\mathcal X}_n)],
\]
which we may view as a third-order tensor with $n$ samples living in $\mathcal H$ along the first mode. The mapping $\Phi$ is not explicitly computed. Instead, all computations are expressed in terms of the kernel function
\[
K(\vec{\mathcal X}_i,\vec{\mathcal X}_j)
= \left\langle \Phi(\vec{\mathcal X}_i), \Phi(\vec{\mathcal X}_j) \right\rangle_{\mathcal H},
\qquad i,j=1,\ldots,n,
\]
which defines the Gram matrix 
\[
K = [K_{ij}]_{i,j=1}^n \in \mathbb{R}^{n\times n}, 
\quad K_{ij}=K(\vec{\mathcal X}_i,\vec{\mathcal X}_j),
\]
which is symmetric and positive semidefinite. This matrix completely characterizes the geometry of the data in the feature space $\mathcal H$.
A common kernel choice is the Gaussian (RBF) kernel:
\begin{equation}\label{ker:gaussian}
K(\vec{\mathcal X}_i,\vec{\mathcal X}_j)
= \exp\!\left( - \frac{\|\vec{\mathcal X}_i-\vec{\mathcal X}_j\|_F^2}{2\sigma^2} \right),
\end{equation}
or, more generally, a radial basis function (RBF) kernel of the form
\begin{equation}\label{ker:rbf}
K(\vec{\mathcal X}_i,\vec{\mathcal X}_j)
= \exp\!\left( - \frac{\|\vec{\mathcal X}_i-\vec{\mathcal X}_j\|_F^2}{c} \right),
\end{equation}
for some parameter $c>0$. Other kernels (polynomial, Laplacian, sigmoid, etc.) also can be used; the only requirement is that $K$ be positive semidefinite so that $K_{ij}=K(\vec{\mathcal X}_i,\vec{\mathcal X}_j)$
corresponds to a valid inner product between ${\mathcal X}_i$ and ${\mathcal X}_j$ in a Hilbert space.
\medskip

We now interpret our tensor-based linear methods (MPCA and MONPP) in a kernel framework. Conceptually, one replaces the original data tensor 
$\mathcal X$ by its lifted representation $\Psi$ and then applies a t-product–based mapping in the feature space. The mapped data take the form
\[
\mathcal Y = \mathcal V^{T} * \Psi,
\]
where $\mathcal V$ is a mapping tensor that solves an appropriate optimization problem. However, since the feature-space dimension is typically very large (and often not explicitly known), we express the optimization problem in terms of the Gram matrices built from kernel evaluations. This avoids the explicit computation of $\Psi$.

\subsection{Multidimensional Kernel PCA (MKPCA)}
Multidimensional Kernel PCA (MKPCA) is a nonlinear kernel extension of MPCA. It carries out MPCA in the feature space $\mathcal H$ and is useful for visualization, classification, denoising, and feature extraction when the data possess nonlinear structure.
We first center the data in feature space. Let
\[
\vec{\mathcal M} = \frac{1}{n}\sum_{i=1}^n \vec{\mathcal X}_i
\]
denote the mean slice, and define the centered tensor
\[
\overline{\mathcal X}
= [\overline{\vec{\mathcal X}}_1,\ldots,\overline{\vec{\mathcal X}}_n], 
\qquad \overline{\vec{\mathcal X}}_i = \vec{\mathcal X}_i - \vec{\mathcal M}.
\]
In feature space, we consider
\[
\overline{\Psi} = [\Phi(\overline{\vec{\mathcal X}}_1),\ldots,\Phi(\overline{\vec{\mathcal X}}_n)],
\]
which is not explicitly formed; instead, we use the centered Gram matrices obtained from \eqref{ker:gaussian} or \eqref{ker:rbf}.
The objective of MKPCA is to solve
\begin{equation}\label{eq:mkpca_obj}
\max_{\mathcal V^T * \mathcal V = \mathcal I}
\operatorname{Trace_f}\!\bigl(
\mathcal V^T * \overline{\Psi} * \overline{\Psi}^T * \mathcal V
\bigr).
\end{equation}

Introduce the feature-space covariance tensor
\[
\mathcal A = \overline{\Psi} * \overline{\Psi}^T
\]
Similarly as in the linear MPCA case, $\mathcal A$ is f-symmetric and f-positive-semidefinite. Therefore Theorem~\ref{theotrace} applies. Define
\[
\widehat{\mathcal A} = {\tt fft}(\mathcal A,[\,],3)
\]
and let $\widehat A_i$ be the $i$th frontal slice. The optimization problem \eqref{eq:mkpca_obj} splits into $p$ independent matrix optimization problems
\begin{equation}\label{eq:mkpca_slice_trace}
\max_{\widehat V_i^T \widehat V_i = I}
\mathrm{Trace}\bigl(\widehat V_i^T \widehat A_i \widehat V_i\bigr),
\qquad i=1,\ldots,p,
\end{equation}
whose solutions are obtained by letting $\widehat V_i$ be the matrix of eigenvectors associated with the $d$ largest eigenvalues of $\widehat A_i$. Equivalently, if we write
\[
\widehat A_i = \widehat{\overline{\Psi}}_i\,\widehat{\overline{\Psi}}_i^T,
\]
where $\widehat{\overline{\Psi}}_i$ is the $i$th frontal slice of $\widehat{\overline{\Psi}}={\tt fft}(\overline{\Psi},[\,],3)$. Then the eigenvalues of $\widehat A_i$ are the squares of the singular values of $\widehat{\overline{\Psi}}_i$:
\begin{equation}\label{eq:mkpca_eigs}
\max_{\widehat V_i^T \widehat V_i = I}
\mathrm{Trace}\bigl(\widehat V_i^T \widehat A_i \widehat V_i\bigr)
= \sum_{j=1}^d \lambda_j(\widehat A_i)
= \sum_{j=1}^d \bigl(\sigma_j^{(i)}(\widehat{\overline{\Psi}}_i)\bigr)^2,
\end{equation}
where $\lambda_1(\widehat A_i)\ge\cdots\ge\lambda_d(\widehat A_i)$ denote the $d$ largest eigenvalues, and $\sigma_j^{(i)}(\widehat{\overline{\Psi}}_i)$ are the corresponding singular values.

In actual computations, $\widehat{\overline{\Psi}}_i$ is not available, however the Gram matrix
\[
\overline G_i = \widehat{\overline{\Psi}}_i^T \widehat{\overline{\Psi}}_i \in \mathbb{R}^{n\times n}
\]
can be computed by kernel evaluations:
\[
(\overline G_i)_{lq}
= \left\langle \Phi(\overline{\vec{\mathcal X}}_l), \Phi(\overline{\vec{\mathcal X}}_q)\right\rangle_{\mathcal H}
= K(\overline{\vec{\mathcal X}}_l,\overline{\vec{\mathcal X}}_q),
\qquad l,q = 1,\ldots,n.
\]
As in standard kernel PCA, we exploit the relation
\[
\widehat{\overline{\Psi}}_i \widehat{\overline{\Psi}}_i^T u = \lambda u
\quad\Longrightarrow\quad
\overline G_i v = \lambda v, \quad v = \widehat{\overline{\Psi}}_i^T u,
\]
so that the spectrum of $\widehat A_i$ can be computed from the $n\times n$ matrix $\overline G_i$. The centered Gram matrix $\overline G_i$ is determined from an uncentered Gram matrix $G_i$ by
\[
\overline G_i = H G_i H, \qquad
H = I_n - \frac{1}{n}\mathbf 1 \mathbf 1^T,
\]
with $G_i$ defined by \eqref{ker:gaussian} (or any other choice of kernel).

We summarize the computations required for MKPCA: For each $i=1,\ldots,p$: 
\begin{enumerate}
  \item Build the centered Gram matrix $\overline G_i$ using the kernel $K$.

  \item Compute the $d$ largest eigenvalues and associated eigenvectors 
  $v_1^{(i)},\ldots,v_d^{(i)}$ of $\overline G_i$.

  \item Form the slice of the mapped data as
  \[
  \widehat{\mathcal Y}(:,:,i) = 
  \begin{bmatrix}
  (v_1^{(i)})^T\\
  \vdots\\
  (v_d^{(i)})^T
  \end{bmatrix},
  \]
  which corresponds to the coordinates of the $n$ samples along the first $d$ kernel principal components of the $i$th frontal slice.

  \item Apply the inverse FFT along the third mode to recover the mapped tensor
  \[
  \mathcal Y = {\tt ifft}(\widehat{\mathcal Y},[\,],3).
  \]
\end{enumerate}

\subsection{Multidimensional Kernel Orthogonal Beighborhood Preserving Projections (MKONPP)}
We extend MONPP to a kernel setting, in the same spirit as MKPCA. Instead of applying ONPP directly to $\mathcal X$, we apply it in feature space to $\overline{\Psi}$. Recall that in the linear case, MONPP minimizes
\[
\left\| \mathcal V^T * \mathcal X * (\mathcal I - \mathcal W^T) \right\|_F^2
\]
under the constraint $\mathcal V^T * \mathcal V = \mathcal I$, where $\mathcal W \in \mathbb{R}^{n\times n\times p}$ is the tensor of reconstruction weights. In the kernel setting, we consider the centered feature-space data $\overline{\Psi}$ and define the MKONPP objective function
\begin{equation}\label{eq:mkonpp_obj}
\min_{\mathcal V^T * \mathcal V = \mathcal I}
\left\| \mathcal V^T * \overline{\Psi} * (\mathcal I - \mathcal W^T) \right\|_F^2.
\end{equation}
Using that $\|\mathcal Z\|_F^2 = \operatorname{Trace_f}(\mathcal Z^T * \mathcal Z)$ and the properties of the t-product, the objective function \eqref{eq:mkonpp_obj} can be written as
\[
\operatorname{Trace_f}\!\left(
\mathcal V^T * \mathcal M^{(\mathrm{ker})} * \mathcal V
\right),
\]
where
\begin{equation}\label{eq:mkonpp_Mker}
\mathcal M^{(\mathrm{ker})}
= \overline{\Psi} * \mathcal Q * \overline{\Psi}^T,
\qquad
\mathcal Q = (\mathcal I - \mathcal W^T)*(\mathcal I - \mathcal W),
\end{equation}
and $\mathcal I$ is the identity tensor of size $(n,n,p)$. As in the linear case, $\mathcal Q$ is f-symmetric and f-positive-semidefinite, and so is $\mathcal M^{(\mathrm{ker})}$. The minimization problem \eqref{eq:mkonpp_obj} therefore is equivalent to
\begin{equation}\label{eq:mkonpp_trace}
\min_{\mathcal V^T * \mathcal V = \mathcal I}
\operatorname{Trace_f}\!\left(
\mathcal V^T * \mathcal M^{(\mathrm{ker})} * \mathcal V
\right),
\end{equation}
which is a direct instance of Theorem~\ref{theotracemin}. Let
\[
\widehat{\mathcal M}^{(\mathrm{ker})}
= {\tt fft}(\mathcal M^{(\mathrm{ker})},[\,],3),
\]
and denote its frontal slices by $\widehat M^{(\mathrm{ker})}_i$, $i=1,\ldots,p$. Then
\[
\widehat M^{(\mathrm{ker})}_i
= \widehat{\overline{\Psi}}_i \, \widehat Q_i \, \widehat{\overline{\Psi}}_i^T,
\qquad i=1,\ldots,p,
\]
where $\widehat{\overline{\Psi}}_i$ and $\widehat Q_i$ are the $i$th frontal slices of $\widehat{\overline{\Psi}}$ and $\widehat{\mathcal Q}$, respectively. The minimization problem \eqref{eq:mkonpp_trace} splits into $p$ matrix minimization problems
\[
\min_{\widehat V_i^T \widehat V_i = I}
\mathrm{Trace}\bigl(\widehat V_i^T \widehat M^{(\mathrm{ker})}_i \widehat V_i\bigr),
\quad i=1,\ldots,p,
\]
whose solutions are obtained by taking $\widehat V_i$ to be the matrix of eigenvectors associated with the $d$ smallest eigenvalues of $\widehat M^{(\mathrm{ker})}_i$. The corresponding minimal value is
\[
\min_{\mathcal V^T * \mathcal V = \mathcal I}
\operatorname{Trace_f}\!\left(
\mathcal V^T * \mathcal M^{(\mathrm{ker})} * \mathcal V
\right)
= \frac{1}{p} \sum_{i=1}^p \sum_{j=1}^d \widetilde\lambda_j(\widehat M^{(\mathrm{ker})}_i),
\]
where $\widetilde\lambda_1(\widehat M^{(\mathrm{ker})}_i)\le\cdots\le\widetilde\lambda_d(\widehat M^{(\mathrm{ker})}_i)$.

Similarly as in MKPCA, the $\widehat{\overline{\Psi}}_i$ are not explicitly available, but the Gram matrices
\[
\overline G_i = \widehat{\overline{\Psi}}_i^T \widehat{\overline{\Psi}}_i
\in \mathbb{R}^{n\times n}, \qquad i=1,\ldots,p,
\]
can be constructed from the kernel function using centered data:
\[
(\overline G_i)_{lq} 
= K(\overline{\vec{\mathcal X}}_l, \overline{\vec{\mathcal X}}_q),
\qquad l,q = 1,\ldots,n.
\]
Writing the eigenproblems in feature-space as
\[
\widehat M^{(\mathrm{ker})}_i u = \lambda u
\quad\Longleftrightarrow\quad
\widehat{\overline{\Psi}}_i \widehat Q_i \widehat{\overline{\Psi}}_i^T u = \lambda u,\qquad i=1,\ldots,p,
\]
and multiplying from the left by $\widehat{\overline{\Psi}}_i^T$, we obtain equivalent eigenproblems in the $n$-dimensional sample-space,
\[
\overline G_i \widehat Q_i v = \lambda v,
\qquad v = \widehat{\overline{\Psi}}_i^T u,\qquad i=1,\ldots,p.
\]
It follows that MKONPP can be implemented by solving, for each $i=1,\ldots,p$, an $n\times n$ eigenproblem involving the centered Gram matrix $\overline G_i$ and the weight matrix $\widehat Q_i$ constructed from $\mathcal W$.
\medskip

The weight tensor $\mathcal W$ is chosen analogously as for MONPP (and for the matrix ONPP of \cite{Kokiopoulou2007}). For each frontal slice $\widehat X_i$ of $\widehat{\mathcal X}={\tt fft}(\mathcal X,[\,],3)$ and each sample index $l \in \{1,\ldots,n\}$, we define a neighborhood $\mathcal N_i(l)$ of $k$ nearest neighbors using, for example, the Frobenius norm to measure the distance between columns,
\[
d_i(l,q) = \bigl\| \widehat X_i^{(l)} - \widehat X_i^{(q)} \bigr\|_F,
\]
where $\widehat X_i^{(l)}$ denotes the $l$th column of $\widehat X_i$. A common choice of the (unnormalized) affinity weights is
\begin{equation}\label{eq:mkonpp_weights}
\widehat w_{lq}^{(i)}
= \exp\!\left(
- \frac{\bigl\| \widehat X_i^{(l)} - \widehat X_i^{(q)} \bigr\|_F^2}{2\sigma^2}
\right),
\qquad l,q = 1,\ldots,n,
\end{equation}
with $\widehat w_{lq}^{(i)}=0$ if $q \notin \mathcal N_i(l)$, and $\sigma>0$ a scaling parameter. Each row is then normalized so that
\[
\sum_{q=1}^n \widehat w_{lq}^{(i)} = 1,\qquad l=1,\ldots,n.
\]
This ensures that each sample is reconstructed as a convex combination of its neighbours. The matrices $\widehat W_i = [\widehat w_{lq}^{(i)}]$ form the frontal slices of the tensor $\widehat{\mathcal W}$, from which $\mathcal W$ is obtained by the inverse FFT.

In summary, MKPCA and MKONPP extend MPCA and MONPP to nonlinear 
feature-spaces by combining the t-product framework with kernel methods. All computations are carried out via Gram matrices built from tensor kernels, thereby preserving the multilinear structure of the data while enabling nonlinear dimensionality reduction.

\section{Multidimensional nonlinear reduction methods}\label{sec:MNRM}
This section discusses two nonlinear reduction methods.

\subsection{Multidimensional Locally Linear Embedding (MLLE)}
Classical matrix Locally Linear Embedding (LLE) is a nonlinear dimensionality reduction technique designed to uncover meaningful low-dimensional structure hidden in high-dimensional datasets. The key assumption is that, although the data live in a high-dimensional space, they lie (approximately) on a smooth low-dimensional manifold. Rather than preserving global distances, LLE focuses on preserving local geometry. For each data point, the method first identifies its nearest neighbors in the original space and expresses it as a linear combination of those neighbors using
reconstruction weights. These weights describe how each point relates to its local manifold patch. In the second step, LLE finds a low-dimensional embedding, in which these same reconstruction relationships are preserved as well as possible: each embedded point should still be approximated by the same weighted combination of its neighbors.

We proceed as follows in the multidimensional (tensor) case: 
Given samples
\[
\vec{\mathcal X}_1,\ldots, \vec{\mathcal X}_n \in \mathbb{R}^{m \times 1 \times p}
\]
and the tensor
\[
\mathcal X=[\vec{\mathcal X}_1, \ldots,\vec{\mathcal X}_n] \in \mathbb{R}^{m \times n \times p},
\]
we seek a reduced tensor
\[
\mathcal Y \in  \mathbb{R}^{d \times n \times p}, \qquad 1\le  d \ll m,
\]
whose lateral slices contain the $d$-dimensional embedding of the $n$ samples for each mode.
MLLE seeks to solve the following minimization problem in tensor form:
\begin{equation}\label{lle1}
\min_{\mathcal{Y} * \mathcal{Y}^T = \mathcal{I}_d}
\; \operatorname{Trace}_f \!\left[
\mathcal{Y} * \left( \mathcal{I}-\mathcal{W}^T \right)
          * \left( \mathcal{I}-\mathcal{W} \right)
          * \mathcal{Y}^T
\right],
\end{equation}
where $\mathcal{W} \in \mathbb{R}^{n \times n \times p}$ is the tensor of
reconstruction weights associated with the data $\mathcal X$, and
$\mathcal{I}$ is the f-identity tensor of size $(n,n,p)$. The constraint
$\mathcal{Y} * \mathcal{Y}^T = \mathcal{I}_d$ (with
$\mathcal{I}_d \in \mathbb{R}^{d \times d \times p}$) avoids the trivial solution
$\mathcal{Y}=0$ and determines the overall scale of the embedding.
Define the f-symmetric and f-positive-semidefinite ``MLLE tensor''
\[
\mathcal{N} = \left( \mathcal{I}-\mathcal{W}^T \right)
              * \left( \mathcal{I}-\mathcal{W} \right)
\in \mathbb{R}^{n \times n \times p}.
\]
Then \eqref{lle1} can be written as
\[
\min_{\mathcal{Y} * \mathcal{Y}^T = \mathcal{I}_d}
\; \operatorname{Trace}_f \left( \mathcal{Y} * \mathcal{N} * \mathcal{Y}^T \right).
\]
Using the definition of $\operatorname{Trace}_f$ and the properties of the t-product, this tensor minimization problem splits in the Fourier domain into $p$ independent matrix minimization problems. Let
\[
\widehat{\mathcal{Y}} = {\tt fft}(\mathcal{Y},[\,],3),\quad
\widehat{\mathcal{N}} = {\tt fft}(\mathcal{N},[\,],3),
\]
and denote by $\widehat{Y}_i$ and $\widehat{N}_i$ the $i$th frontal slices of
$\widehat{\mathcal{Y}}$ and $\widehat{\mathcal{N}}$, respectively. Then
\[
\operatorname{Trace}_f \left( \mathcal{Y} * \mathcal{N} * \mathcal{Y}^T \right)
= \frac{1}{p} \sum_{i=1}^p
\operatorname{Trace} \left( \widehat{Y}_i \widehat{N}_i \widehat{Y}_i^T \right),
\]
and the minimization problem \eqref{lle1} can be expressed as
\begin{equation}\label{lle2}
\min_{\widehat{Y}_i \widehat{Y}_i^T = I_d}
\; \operatorname{Trace} \left(
\widehat{Y}_i \widehat{N}_i \widehat{Y}_i^T
\right),
\qquad i=1,\ldots,p.
\end{equation}
Since each $\widehat{N}_i$ is symmetric positive semidefinite, the solution of \eqref{lle2} can be obtained by solving the eigenproblems
\begin{equation}\label{lle3}
\widehat{N}_i v = \lambda v,\qquad i=1,\ldots,p,
\end{equation}
and selecting, for each $i$, the eigenvectors associated with the $d$ smallest
nonzero eigenvalues. More precisely, if we order the eigenvalues of
$\widehat{N}_i$ as
\[
0 = \lambda_1^{(i)} \le \lambda_2^{(i)} \le \cdots \le \lambda_n^{(i)},
\]
and denote the corresponding eigenvectors by $v_i^{(j)}$, $j=1,\ldots,n$, then the
$i$-th frontal slice of the embedding tensor is
\[
\widehat{Y}_i =
\begin{bmatrix}
\left(v_i^{(2)}\right)^T \\
\vdots \\
\left(v_i^{(d+1)}\right)^T
\end{bmatrix}
\in \mathbb{R}^{d \times n},\qquad i=1,\ldots,p.
\]
Finally, the multidimensional embedding is recovered in the original domain via
\[
\mathcal{Y} = {\tt ifft}(\widehat{\mathcal{Y}},[\,],3).
\]
The weights forming $\mathcal{W}$ are constructed as in the neighborhood-based schemes described earlier (e.g., by using Gaussian weights on $k$ nearest neighbors).

\subsection{Multidimensional Laplacian Eigenmaps (MLE)}
Laplacian Eigenmaps refers to a nonlinear dimensionality reduction method grounded in manifold learning and spectral graph theory. The method constructs a weighted graph,
whose nodes are the data samples and the edges have weights that encode local similarity. This determines a symmetric adjacency matrix associated with the graph. A graph Laplacian is built from this adjacency matrix, and a low-dimensional embedding is obtained from the eigenvectors associated with its smallest nontrivial eigenvalues. This embedding preserves local neighborhood structure and captures the intrinsic geometry of the data manifold. Dimensionality reduction of matrix-valued data with the aid of Laplacian Eigenmaps is described in \cite{belkin2001,belkin2003}.

In the multidimensional setting, let
$\mathcal X \in \mathbb{R}^{m \times n \times p}$
be the data tensor (with $n$ samples), let
$\mathcal W \in \mathbb{R}^{n \times n \times p}$ be an associated tensor of graph weights, and let $\mathcal{D} \in \mathbb{R}^{n \times n \times p}$ be a degree tensor. The tensors $\mathcal{W}$ and $\mathcal{D}$ will be defined below as inverse Fourier transforms of the computed tensors $\widehat{\mathcal{W}}$ and $\widehat{\mathcal{D}}$, respectively. Introduce the Laplacian tensor for the multilayer graph determined by ${\mathcal W}$ and ${\mathcal D}$ as
\begin{equation}\label{lap1}
\mathcal L = \mathcal D - \mathcal W
\in \mathbb{R}^{n \times n \times p}.
\end{equation}
The Multidimensional Laplacian Eigenmaps (MLE) method seeks an embedding tensor
\[
\mathcal Z \in \mathbb{R}^{d \times n \times p}, \qquad 1\le d \ll m,
\]
by solving
\begin{equation}\label{lap3}
\min_{\mathcal{Z} * \mathcal{D} * \mathcal{Z}^T = \mathcal{I}_d}
\operatorname{Trace}_f \left( \mathcal{Z} * \mathcal{L} * \mathcal{Z}^T \right),
\end{equation}
where the constraint $\mathcal{Z} * \mathcal{D} * \mathcal{Z}^T = \mathcal{I}_d$ provides a normalization. Using properties of the t-product together with the definition of $\operatorname{Trace}_f$, the minimization problem \eqref{lap3} decouples in the Fourier domain into
$p$ generalized eigenproblems: Let
\[
\widehat{\mathcal{L}} = {\tt fft}(\mathcal L,[\,],3),\quad
\widehat{\mathcal{D}} = {\tt fft}(\mathcal D,[\,],3),\quad
\widehat{\mathcal{Z}} = {\tt fft}(\mathcal Z,[\,],3),
\]
and denote the $i$th frontal slices of the transformed tensors by 
$\widehat{L}_i$, $\widehat{D}_i$, and $\widehat{Z}_i$, respectively. Then the minimization problem \eqref{lap3} is equivalent to the $p$ independent minimization problems
\[
\min_{\widehat{Z}_i \widehat{D}_i \widehat{Z}_i^T = I_d}
\operatorname{Trace}\big( \widehat{Z}_i \widehat{L}_i \widehat{Z}_i^T \big),
\qquad i=1,\ldots,p,
\]
whose solutions are given by solving the generalized eigenvalue problems
\begin{equation}\label{lap5}
\widehat{L}_i \, v_i^{(j)} = \lambda_j^{(i)} \, \widehat{D}_i \, v_i^{(j)},
\qquad i=1,\ldots,p.
\end{equation}

For each $i$, we order the generalized eigenvalues as
\[
0 = \lambda_1^{(i)} \le \lambda_2^{(i)} \le \cdots \le \lambda_n^{(i)},
\]
and discard the eigenvector associated with 
$\lambda_1^{(i)}=0$. A
solution for the $i$th frontal slice of the embedding tensor is obtained as
\begin{equation}\label{lap6}
\widehat{Z}_i =
\begin{bmatrix}
\big(v_i^{(2)}\big)^T \\
\vdots \\
\big(v_i^{(d+1)}\big)^T
\end{bmatrix},
\qquad i=1,\ldots,p.
\end{equation}
Finally, the embedding in the original domain is given by
\[
\mathcal{Z} = {\tt ifft}(\widehat{\mathcal{Z}},[\,],3).
\]

For each frontal slice $i=1,\ldots,p$, the matrices $\widehat{L}_i$ and
$\widehat{D}_i$ satisfy
\begin{equation}\label{hatLi}
\widehat{L}_i = \widehat{D}_i - \widehat{W}_i,
\end{equation}
where $\widehat{W}_i$ is the $i$th frontal slice of
$\widehat{\mathcal{W}} = {\tt fft}(\mathcal W,[\,],3)$. A standard and practical
choice for the weights are the Gaussian kernel on pairwise distances:
\begin{equation}\label{lap9}
\widehat{W}_{lq i}
= \exp\!\left(
   - \frac{\big\|\widehat{\vec X}_i^{(l)} - \widehat{\vec X}_i^{(q)}\big\|_F^2}
          {2 \sigma^2}
 \right),
\quad l,q = 1,\ldots,n,\;\; i=1,\ldots,p,
\end{equation}
where $\widehat{\vec X}_i^{(l)}$ denotes the $l$th column of the $i$th frontal slice
$\widehat{X}_i$ of the tensor $\widehat{\mathcal X} = {\tt fft}(\mathcal X,[\,],3)$,
and $\sigma > 0$ is a scaling parameter that controls the locality of the graph connections. Thus, given the data tensor 
$\mathcal X \in \mathbb{R}^{m \times n \times p}$, we compute its transform $\widehat Z$ and the tensor \eqref{lap9}. Let the diagonal matrices $\widehat D_i$, i=1,\ldots,p, be made up of the diagonal entries of the symmetric matrix $\widehat W_i$ and define the matrices 
$\widehat L_i$ by \eqref{hatLi}. Then the matrices $\widehat Z_i$ are
determined by \eqref{lap6} after solving the generalized eigenproblems
\eqref{lap5}.

\section{Numerical experiments}\label{sec:NE}
This section illustrates the performance of the proposed multilinear dimensionality reduction methods when applied to  several benchmark multidimensional datasets. The general procedure is to first project the original high-dimensional tensor data onto a low-dimensional subspace, and then classify the reduced representations using standard supervised learning algorithms. We report both classification accuracy and computing time and
compare the performance of Multidimensional Principal Component Analysis (MPCA) and its kernel extension (MKPCA),
Multidimensional Orthogonal Neighborhood Preserving Projection (MONPP) and its kernel variant (MKONPP), Multidimensional Locally Linear Embedding (MLLE), and the tensor version of Laplacian Eigenmaps (MLE).

Given a reduced tensor
\[
\mathcal{Y} \in \mathbb{R}^{d \times n \times p},
\]
we construct a matrix suitable for conventional classifiers by vectorizing each
sample across the first and third modes. More precisely, the $j$th sample is
mapped to a vector in $\mathbb{R}^{dp}$, and all samples are stacked to form a
matrix in $\mathbb{R}^{n \times dp}$. The classifier is then trained and evaluated
on this matrix representation.
We choose the bandwidth parameter for the kernel methods 
\[
\sigma = \sqrt{\frac{m_d}{2}},
\] 
where $m_d$ denotes the median of all pairwise squared distances between the $n$ samples.

Each dimensionality reduction method is evaluated for two target dimensions,
$d = 5$ and $d = 10$. For classification, we use four widely adopted classifiers:
Random Forest (RF) \cite{Breiman2001RandomForests}, K-Nearest Neighbors (KNN)
\cite{CoverHart67}, Support Vector Machine (SVM) \cite{svm95}, and Extra Trees
\cite{Etrees}. Performance is measured using stratified 5-fold cross-validation:
in each fold, 80\% of the data are used for training and 20\% for testing. The
classification accuracy is defined as
\begin{equation}
\text{Accuracy}(\%)= \frac{\text{Number of correct predictions}}{\text{Total number of samples}} \times 100.
\end{equation}
\subsection{Datasets}

\nd The following benchmark datasets are used in our experiments:
\medskip
\begin{itemize}
   \item \textbf{Salinas} \cite{salinas-dataset}: 
    A hyperspectral dataset with 224 spectral bands and 16 agricultural classes. 
    After removing water absorption bands, 204 bands are retained and labeled pixels
    are extracted. The tensor representation follows the same spatial–spectral
    unfolding strategy as in standard hyperspectral classification settings.

    \item \textbf{FSDD} \cite{fsdd}: 
    The Free Spoken Digit Dataset consists of audio recordings of digits (0--9) from 6 speakers. 
    Time–frequency representations (spectrograms) of size $64 \times 64 \times 4$ 
    are computed for 2,000 signals and reshaped into tensors of size $(4096, 2000, 4)$.

     \item \textbf{Gait Recognition} \cite{casia-gait}: 
    750 binary silhouette sequences from 10 individuals performing walking motions.
    Each $128 \times 128$ silhouette image is partitioned into horizontal bands to
    form third-order tensors of size $(8192, 750, 2)$.

    \item \textbf{Washington DC Mall (WDCM)}:
    Hyperspectral images of the Washington DC Mall collected by the HYDICE sensor \footnote{\url{http://lesun.weebly.com/hyperspectral-data-set.html}}. The full scene contains $1208 \times 307$ pixels and 191 spectral
    bands. Four classes (\emph{grass land}, \emph{tree}, \emph{roof}, and \emph{road})
    are extracted from manually selected $7 \times 7$ blocks, and reshaped into
    tensors of size $(4704, 2000, 2)$.

    \item \textbf{MNIST} \cite{mnist-dataset}: 
    A dataset of handwritten digits (0--9) with grayscale images of size $28 \times 28$. 
    For tensor-based processing, each image is partitioned into small patches to form
    tensors of size \hfill\break $(392, 3000, 2)$.
\end{itemize}

\nd Table~\ref{tabledatasets} summarizes the tensor structures and domain
characteristics of the datasets.

\begin{table}[h!]
\centering
\caption{Dataset specifications, tensor dimensions, and class distributions.}
\begin{tabular}{lcccl}
\hline
\textbf{Dataset} & \textbf{Tensor} & \textbf{Samples} & \textbf{Classes} & \textbf{Domain / Modality} \\
\hline
Salinas          & (102, 2000, 2)   & 2000 & 16  & Hyperspectral imagery \\
FSDD             & (4096, 2000, 4)  & 2000 & 10  & Audio / spoken digits \\
Gait             & (8192, 750, 2)   & 750  & 10  & Human motion silhouettes \\
Washington DC    & (4704, 2000, 2)  & 2000 & 4   & Hyperspectral imagery \\
MNIST            & (392, 2000, 2)   & 3000 & 10  & Handwritten digits  \\
\hline
\end{tabular}
\label{tabledatasets}
\end{table}

\subsection{Results and discussion}

In what follows, we present quantitative results for each dataset. For brevity,
we report mean classification accuracies over the 5 folds and the corresponding
CPU-times required by each dimensionality reduction method (including projection
and classifier training).

\subsubsection{Salinas}

\begin{table}[htp]
\centering
\caption{Results for the Salinas dataset.}
\begin{tabular}{lcccccc}
\hline
\textbf{d} 
& \textbf{MPCA} 
& \textbf{MKPCA} 
& \textbf{MONPP} 
& \textbf{MKONPP} 
& \textbf{MLLE} 
& \textbf{MLE} \\
\hline
\multicolumn{7}{c}{\textbf{RF - Accuracy (\%)}} \\
\hline
$d = 5$  & 91.85 & 85.80 & 82.55 & 61.50 & 85.00 & 85.95 \\
$d = 10$ & 92.65 & 88.95 & 84.35 & 64.70 & 87.60 & 86.15 \\
\hline
\multicolumn{7}{c}{\textbf{KNN - Accuracy (\%)}} \\
\hline
$d = 5$  & 90.65 & 83.65 & 80.00 & 59.65 & 83.80 & 83.90 \\
$d = 10$ & 90.30 & 87.10 & 81.65 & 64.60 & 85.80 & 84.60 \\
\hline
\multicolumn{7}{c}{\textbf{SVM - Accuracy (\%)}} \\
\hline
$d = 5$  & 92.60 & 86.60 & 83.35 & 64.75 & 78.20 & 83.10 \\
$d = 10$ & 92.90 & 90.40 & 85.90 & 69.35 & 86.25 & 85.60 \\
\hline
\multicolumn{7}{c}{\textbf{ExtraTrees - Accuracy (\%)}} \\
\hline
$d = 5$  & 92.85 & 86.35 & 82.30 & 62.55 & 84.70 & 85.70 \\
$d = 10$ & 92.55 & 88.65 & 84.35 & 66.10 & 87.45 & 86.30 \\
\hline
\multicolumn{7}{c}{\textbf{CPU-time (s)}} \\
\hline
$d = 5$  & 3.02 & 7.49 & 3.25 & 8.05 & 3.11 & 3.25 \\
$d = 10$ & 2.87 & 7.30 & 3.38 & 8.72 & 4.84 & 3.26 \\
\hline
\end{tabular}
\label{tab:salinas}
\end{table}

\nd Table~\ref{tab:salinas} shows that, for almost every classifier and both values
of $d$, MPCA attains the highest or near-highest accuracy. For instance, with SVM
and $d = 10$, MPCA reaches $92.90\%$, outperforming the alternatives for this
configuration. Increasing the dimensionality from $d = 5$ to $d = 10$ generally
improves accuracy for most methods (e.g., MKPCA + SVM: $86.60\% \rightarrow 90.40\%$),
indicating that a slightly richer subspace helps preserve discriminative spectral
information. Overall, for Salinas, the combination “MPCA + (RF, SVM, ExtraTrees or
KNN)” with $d = 10$ offers an excellent trade-off between accuracy and computation.

\subsubsection{FSDD}

\begin{table}[htp]
\centering
\caption{Results for the FSDD dataset.}
\begin{tabular}{lcccccc}
\hline
\textbf{d} 
& \textbf{MPCA} 
& \textbf{MKPCA} 
& \textbf{MONPP} 
& \textbf{MKONPP} 
& \textbf{MLLE} 
& \textbf{MLE} \\
\hline
\multicolumn{7}{c}{\textbf{RF - Accuracy (\%)}} \\
\hline
$d = 5$  & 81.25 & 76.05 & 59.35 & 44.55 & 81.75 & 85.65 \\
$d = 10$ & 90.65 & 85.95 & 61.45 & 54.60 & 86.25 & 87.95 \\
\hline
\multicolumn{7}{c}{\textbf{KNN - Accuracy (\%)}} \\
\hline
$d = 5$  & 83.10 & 76.70 & 58.00 & 41.95 & 79.80 & 83.70 \\
$d = 10$ & 90.60 & 87.90 & 59.40 & 55.20 & 82.90 & 83.75 \\
\hline
\multicolumn{7}{c}{\textbf{SVM - Accuracy (\%)}} \\
\hline
$d = 5$  & 88.90 & 82.70 & 55.25 & 43.05 & 84.60 & 87.35 \\
$d = 10$ & 95.95 & 92.90 & 57.33 & 56.55 & 87.80 & 89.30 \\
\hline
\multicolumn{7}{c}{\textbf{ExtraTrees - Accuracy (\%)}} \\
\hline
$d = 5$  & 84.30 & 78.80 & 61.00 & 45.90 & 84.05 & 87.00 \\
$d = 10$ & 92.30 & 89.15 & 62.65 & 57.90 & 87.70 & 87.80 \\
\hline
\multicolumn{7}{c}{\textbf{CPU-time (s)}} \\
\hline
$d = 5$  & 1085.58 & 10.08 & 137.28 & 21.47 & 7.57 & 6.09 \\
$d = 10$ & 1082.25 & 11.22 & 137.64 & 21.27 & 10.36 & 6.31 \\
\hline
\end{tabular}
\label{tab:fsdd}
\end{table}

\nd For the audio FSDD dataset (Table~\ref{tab:fsdd}), MPCA combined with SVM
achieves the best performance, reaching $95.95\%$ at $d = 10$. MLE and MKPCA also
perform competitively, especially for higher dimension, whereas MONPP and MKONPP
lag significantly behind. The CPU-times show the computational cost of MPCA
on this large tensor (around $10^3$ seconds). MKPCA and MLE are much faster.
When computational resources are limited, MKPCA or MLE with a strong classifier
(e.g., SVM or ExtraTrees) can be used as a good compromise between accuracy and efficiency.

\subsubsection{Gait}

\begin{table}[htp]
\centering
\caption{Results for the Gait dataset.}
\begin{tabular}{lcccccc}
\hline
\textbf{d} 
& \textbf{MPCA} 
& \textbf{MKPCA} 
& \textbf{MONPP} 
& \textbf{MKONPP} 
& \textbf{MLLE} 
& \textbf{MLE} \\
\hline
\multicolumn{7}{c}{\textbf{RF - Accuracy (\%)}} \\
\hline
$d = 5$  & 98.27 & 98.00 & 96.80 & 61.73 & 65.73 & 88.80 \\
$d = 10$ & 98.67 & 98.67 & 96.80 & 69.73 & 68.27 & 91.07 \\
\hline
\multicolumn{7}{c}{\textbf{KNN - Accuracy (\%)}} \\
\hline
$d = 5$  & 94.93 & 95.47 & 94.80 & 63.20 & 92.40 & 91.07 \\
$d = 10$ & 94.67 & 95.20 & 94.67 & 68.40 & 56.53 & 84.53 \\
\hline
\multicolumn{7}{c}{\textbf{SVM - Accuracy (\%)}} \\
\hline
$d = 5$  & 97.73 & 97.47 & 55.47 & 65.87 & 45.20 & 57.33 \\
$d = 10$ & 98.53 & 98.53 & 58.80 & 74.93 & 63.87 & 71.87 \\
\hline
\multicolumn{7}{c}{\textbf{ExtraTrees - Accuracy (\%)}} \\
\hline
$d = 5$  & 99.07 & 98.80 & 97.07 & 65.33 & 66.13 & 89.07 \\
$d = 10$ & 99.20 & 98.93 & 97.73 & 70.80 & 68.80 & 91.87 \\
\hline
\multicolumn{7}{c}{\textbf{CPU-time (s)}} \\
\hline
$d = 5$  & 1122.49 & 9.04 & 158.81 & 8.00 & 5.07 & 4.02 \\
$d = 10$ & 1081.84 & 5.91 & 118.99 & 9.29 & 4.84 & 4.63 \\
\hline
\end{tabular}
\label{tab:gait}
\end{table}

\nd For the Gait dataset (Table~\ref{tab:gait}), all PCA-based approaches
(MPCA, MKPCA, MONPP) achieve very high accuracy, especially when combined with
tree-based classifiers. The best performance is obtained by MPCA or MKPCA with
ExtraTrees at $d = 10$ (around $99\%$). This illustrates that  relatively low-dimensional
embeddings suffice to preserve most of the discriminative temporal–spatial gait
information. MLLE and MLE are less competitive for this dataset. In terms of
runtime, MPCA is the most expensive, while MKPCA and MLE are much cheaper.

\subsubsection{Washington DC Mall}

\begin{table}[htp]
\centering
\caption{Results for the Washington DC dataset.}
\begin{tabular}{lcccccc}
\hline
\textbf{d} 
& \textbf{MPCA} 
& \textbf{MKPCA} 
& \textbf{MONPP} 
& \textbf{MKONPP} 
& \textbf{MLLE} 
& \textbf{MLE} \\
\hline
\multicolumn{7}{c}{\textbf{RF - Accuracy (\%) }} \\
\hline
$d = 5$  & 95.40  & 95.25  & 89.00  & 77.85  & 94.25  & 95.55  \\
$d = 10$ & 96.05  & 95.95  & 92.30  & 80.50  & 94.50  & 95.50  \\
\hline
\multicolumn{7}{c}{\textbf{KNN - Accuracy (\%)}} \\
\hline
$d = 5$  & 93.30 & 94.30 & 82.70 & 76.95 & 93.05 & 95.45 \\
$d = 10$ & 92.90 & 94.70 & 87.85 & 80.30 & 93.05 & 95.25 \\
\hline
\multicolumn{7}{c}{\textbf{SVM - Accuracy (\%)}} \\
\hline
$d = 5$  & 95.20 & 95.10 & 82.25 & 78.70 & 93.55 & 95.00 \\
$d = 10$ & 95.85 & 96.40 & 88.25 & 82.45 & 93.45 & 95.45 \\
\hline
\multicolumn{7}{c}{\textbf{ExtraTrees - Accuracy (\%)}} \\
\hline
$d = 5$  & 95.70 & 95.55 & 89.90 & 77.85 & 94.45 & 96.25 \\
$d = 10$ & 96.80 & 96.25 & 93.15 & 81.50 & 94.95 & 96.05 \\
\hline
\multicolumn{7}{c}{\textbf{CPU-time (s)}} \\
\hline
$d = 5$  & 229.89 & 9.44 & 36.28 & 16.46 & 5.63 & 5.11 \\
$d = 10$ & 229.68 & 9.10 & 36.47 & 15.35 & 7.75 & 4.83 \\
\hline
\end{tabular}
\label{tab:washingtondc}
\end{table}

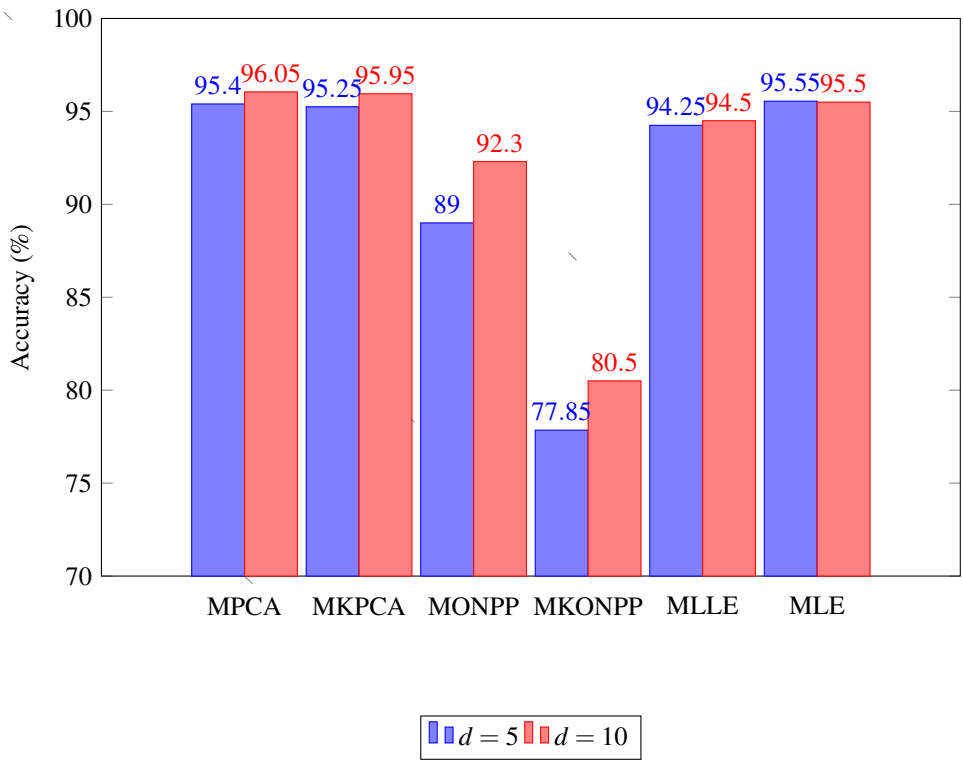
\begin{figure}[htp]
\centering
\begin{tikzpicture}
  \begin{axis}[
    ybar=0pt,
    bar width=20pt,
    width=13cm,
    height=9cm,
    enlarge x limits=0.25,
    ylabel={Accuracy (\%)},
    symbolic x coords={MPCA, MKPCA, MONPP, MKONPP, MLLE, MLE},
    xtick=data,
    xtick style={rotate=45, anchor=east},
    legend style={at={(0.5,-0.25)}, anchor=north, legend columns=2},
    ymin=70, ymax=100,
    nodes near coords,
    nodes near coords align={vertical},
  ]
    \addplot+[fill=blue!50] coordinates {
      (MPCA,95.40) (MKPCA,95.25) (MONPP,89.00) (MKONPP,77.85) (MLLE,94.25) (MLE,95.55)
    };
    \addplot+[fill=red!50] coordinates {
      (MPCA,96.05) (MKPCA,95.95) (MONPP,92.30) (MKONPP,80.50) (MLLE,94.50) (MLE,95.50)
    };
    \legend{$d=5$, $d=10$}
  \end{axis}
\end{tikzpicture}
\caption{Classification accuracies on the Washington-DC dataset for $d=5$ and $d=10$.}
\label{fig:washingtondc_bar}
\end{figure}

\medskip
  
\nd Table \ref{tab:washingtondc} reports the accuracy for various dimensionality-reduction methods (MPCA, MKPCA, MONPP, MKONPP, MLLE, MLE) applied to the Washington-DC dataset, for two reduced dimensions \(d = 5\) and \(d = 10\) and several classifiers (RF, KNN, SVM, ExtraTrees). 
Figure~\ref{fig:washingtondc_bar} illustrates the impact of the target dimension on accuracy for this dataset. Overall, the impact of increasing the target dimension from $d=5$ to $d=10$ is modest but consistently positive across the methods, especially for MPCA and MKPCA, confirming that relatively low multilinear dimensions suffice for accurate classification performance on this scene.

\subsubsection{MNIST}

\begin{table}[htp]
\centering
\caption{Results for the MNIST dataset.}
\begin{tabular}{lcccccc}
\hline
\textbf{d} 
& \textbf{MPCA} 
& \textbf{MKPCA} 
& \textbf{MONPP} 
& \textbf{MKONPP} 
& \textbf{MLLE} 
& \textbf{MLE} \\
\hline
\multicolumn{7}{c}{\textbf{RF - Accuracy (\%)}} \\
\hline
$d = 5$  & 81.50 & 79.90 & 82.45 & 77.55 & 80.40 & 86.30 \\
$d = 10$ & 87.35 & 84.15 & 88.30 & 83.10 & 87.30 & 89.10 \\
\hline
\multicolumn{7}{c}{\textbf{KNN - Accuracy (\%)}} \\
\hline
$d = 5$  & 83.80 & 79.65 & 83.55 & 75.35 & 77.20 & 85.10 \\
$d = 10$ & 90.20 & 87.10 & 89.85 & 82.50 & 85.90 & 89.05 \\
\hline
\multicolumn{7}{c}{\textbf{SVM - Accuracy (\%)}} \\
\hline
$d = 5$  & 87.00 & 84.30 & 86.35 & 78.70 & 81.40 & 87.75 \\
$d = 10$ & 92.35 & 89.65 & 91.95 & 86.95 & 88.60 & 90.50 \\
\hline
\multicolumn{7}{c}{\textbf{ExtraTrees - Accuracy (\%)}} \\
\hline
$d = 5$  & 84.45 & 81.15 & 83.90 & 77.95 & 81.75 & 87.10 \\
$d = 10$ & 90.10 & 85.80 & 89.45 & 84.15 & 87.15 & 89.65 \\
\hline
\multicolumn{7}{c}{\textbf{CPU-time (s)}} \\
\hline
$d = 5$  & 3.95 & 7.33 & 3.67 & 7.81 & 4.17 & 3.54 \\
$d = 10$ & 3.69 & 7.42 & 3.74 & 9.12 & 5.76 & 3.67 \\
\hline
\end{tabular}
\label{tab:mnist_results}
\end{table}

\nd For the MNIST dataset (Table~\ref{tab:mnist_results}), SVM consistently
delivers the best classification performance regardless of the dimensionality
reduction method. Among the latter methods, MPCA, MONPP, and MLE perform the best for
$d = 10$, with accuracies around $90\%$ or higher, while MKONPP and MKPCA yield slightly lower accuracy. The results show that projecting digit images down to $d = 10$
preserves enough discriminative information for competitive recognition accuracy, while substantially reducing the original image dimension.

\subsubsection{Effect of the target dimension on FSDD}

\begin{figure}[htp]
    \centering
    \includegraphics[width=0.6\textwidth]{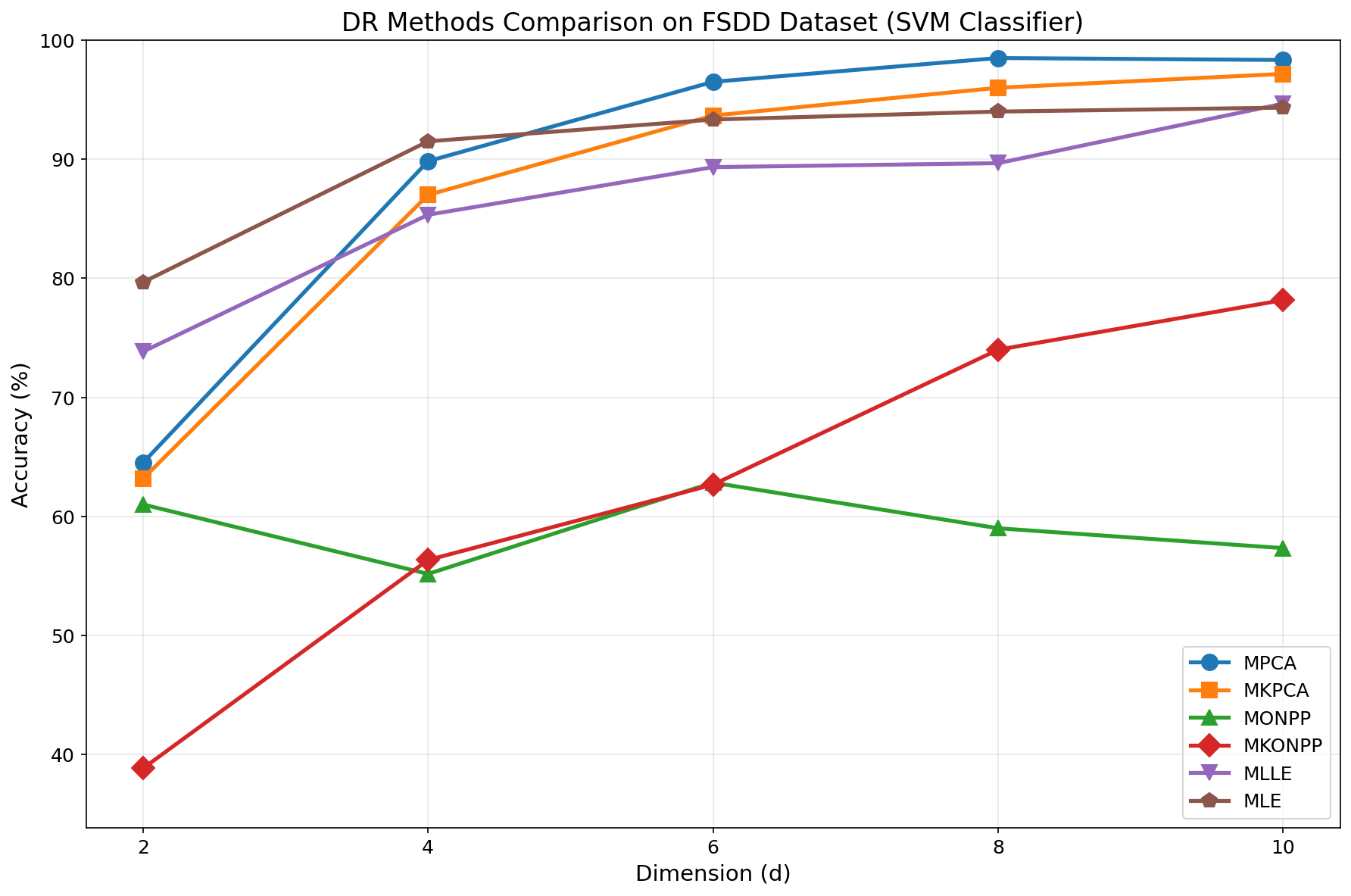}
    \caption{Classification accuracy comparison of multilinear dimensionality 
    reduction methods on the FSDD dataset with target dimensions $d = 2,4,6,8,10$.}
    \label{fig:dr_fsdd}
\end{figure}

\nd Figure~\ref{fig:dr_fsdd} displays the evolution of SVM classification
accuracy when used for the FSDD dataset as a function of the target dimension
$d \in \{2,4,6,8,10\}$. MPCA is the most effective dimensionality reduction method on this dataset, closely followed by MKPCA and MLE. MLLE is competitive for larger $d$-values, whereas NPP-based methods (MONPP and MKONPP) remain
less suitable for this particular audio classification task.

\nd Overall, the experiments show that modest multilinear dimensionality reduction
($d \leq 10$) is sufficient to reach high classification accuracies on a wide
range of multidimensional data types, and that the proposed tensor-based methods,
in particular MPCA and its kernel extension, offer a favourable balance between
discriminative power and computational efficiency.

\section{Conclusion}\label{sec:concl}

In this paper, we have extended several classical matrix-based dimensionality reduction techniques to the multidimensional setting by exploiting the algebraic structure of third-order tensors through the t-product formalism. This tensor framework preserves the inherent multiway correlations of the data, which are often destroyed when higher-order arrays are vectorized or flattened into matrices.

By recasting the usual trace-optimization criteria in the tensor domain, we obtained multidimensional counterparts of standard linear methods such as PCA and ONPP, together with nonlinear manifold learning techniques including LLE and Laplacian Eigenmaps. We also introduced kernelized tensor variants of PCA and ONPP, that allow us to capture nonlinear relationships while still operating directly on tensor-structured data. Altogether, the proposed methods offer a unified and flexible toolkit for processing high-dimensional multiway data in applications such as image and video analysis, remote sensing, signal processing, and multidimensional data visualization.

The numerical experiments on a diverse set of benchmark datasets show these tensor-based methods to be competitive, and often superior, with regard to classification performance while reducing the dimensionality of the data by one to two orders of magnitude. In particular, MPCA and its kernel extension generally provide very favorable trade-offs between accuracy and computational cost, whereas manifold-based methods (MLLE and MLE) may further improve classification performance in some settings at the price of higher complexity. Across all datasets, the reduced representations were evaluated using standard classifiers such as SVM, KNN, Random Forest, and Extra Trees, highlighting the versatility of the proposed tensor approaches as generic preprocessing tools for supervised learning on multidimensional data.

\section*{Acknowledgment}
This Project was funded by the Deanship of Scientific Research (DSR) at King Abdulaziz University, Jeddah, Saudi Arabia under grant no. IPP: 1460-665-2025. The authors acknowledge with thanks DSR for technical and financial support.

\bibliographystyle{plain}  
\bibliography{bibliography}    

\end{document}